\date{}
\title[Semigroups generated by fractional Ornstein-Uhlenbeck operators]{Description of the smoothing effects of semigroups generated by fractional Ornstein-Uhlenbeck operators and subelliptic estimates}
\author{Paul Alphonse}
\address{Univ Rennes, CNRS, IRMAR - UMR 6625, F-35000 Rennes, France}
\email{paul.alphonse@ens-lyon.fr}
\keywords{fractional Ornstein-Uhlenbeck operators, Gevrey-type regularity, Kalman rank condition, subelliptic estimates}
\subjclass[2010]{47D06, 35B65, 35H20}
\numberwithin{equation}{section}
\newtheorem{thm}{Theorem}[section]
\newtheorem{prop}[thm]{Proposition}
\newtheorem{lem}[thm]{Lemma}
\newtheorem{cor}[thm]{Corollary}
\theoremstyle{definition}
\DeclareMathOperator{\Tr}{Tr}
\DeclareMathOperator{\Ker}{Ker}
\DeclareMathOperator{\Ran}{Ran}
\DeclareMathOperator{\Rank}{Rank}
\DeclareMathOperator{\Span}{Span}
\begin{document}

\sloppy

\selectlanguage{english}

\begin{abstract}
We study semigroups generated by general fractional Ornstein-Uhlenbeck operators acting on $L^2(\mathbb R^n)$. We characterize geometrically the partial Gevrey-type smoothing properties of these semigroups and we sharply describe the blow-up of the associated seminorms for short times, generalizing the hypoelliptic and the quadratic cases. As a byproduct of this study, we establish partial subelliptic estimates enjoyed by fractional Ornstein-Uhlenbeck operators on the whole space by using interpolation theory.
\end{abstract}

\maketitle

\section{Introduction}

We study semigroups generated by fractional Ornstein-Uhlenbeck operators. These are non-local and non-selfadjoint operators in general, sum of a fractional diffusion and a linear transport operator. More precisely, the fractional Ornstein-Uhlenbeck operator associated with the positive real number $s>0$ and the real $n\times n$ matrices $B$ and $Q$, with $Q$ symmetric positive semidefinite, is given by
\begin{equation}\label{22072019E2}
	\mathcal P = \frac12\Tr^s(-Q\nabla^2_x) + \langle Bx,\nabla_x\rangle,\quad x\in\mathbb R^n.
\end{equation}
In this definition, the operator $\Tr^s(-Q\nabla^2_x)$ stands for the Fourier multiplier associated with the symbol $\langle Q\xi,\xi\rangle^s$, and $\langle Bx,\nabla_x\rangle$ denotes the following differential operator
$$\langle Bx,\nabla_x\rangle = \sum_{i=1}^n\sum_{j=1}^nB_{i,j}x_j\partial_{x_i},\quad B = (B_{i,j})_{1\le i,j\le n}.$$

The operator $\mathcal P$ equipped with the domain 
\begin{equation}\label{17062020E2}
	D(\mathcal P) = \big\{u\in L^2(\mathbb R^n) : \mathcal Pu\in L^2(\mathbb R^n)\big\},
\end{equation}
is known from the work \cite{AB} (Theorem 1.1) to generate a strongly continuous semigroup $(e^{-t\mathcal P})_{t\geq0}$ on $L^2(\mathbb R^n)$. The purpose of this work is to understand how the possible non-commutation phenomena between the diffusion and the transport parts of the operator $\mathcal P$ allow the associated evolution operators $e^{-t\mathcal P}$ to enjoy smoothing properties in specific directions of the space we aim at sharply and completely describing. More precisely, our objective is to exhibit the vector spaces $\Sigma\subset\mathbb R^n$ containing the smoothing directions of the operators $e^{-t\mathcal P}$, i.e. such that for all $t>0$, $m\geq1$ and $\xi_1,\ldots,\xi_m\in\Sigma$, there exists a positive constant $c_{t,m,\xi_1,\ldots,\xi_m}>0$ such that for all $u\in L^2(\mathbb R^n)$, 
\begin{equation}\label{17062020E1}
	\big\Vert\langle\xi_1,\nabla_x\rangle\ldots\langle\xi_m,\nabla_x\rangle e^{-t\mathcal P}u\big\Vert_{L^2(\mathbb R^n)}\le c_{t,m,\xi_1,\ldots,\xi_m}\ \Vert u\Vert_{L^2(\mathbb R^n)},
\end{equation}
and to sharply describe the dependence of the constant $c_{t,m,\xi_1,\ldots,\xi_m}$ with respect to $m\geq1$ and $\xi_1,\ldots,\xi_m\in\Sigma$, at least for short times $0<t\ll1$. This kind of estimates have already been obtained with a precise control of the short times asymptotics in the case $s=1$ in the work \cite{AB2} (Example 2.7). On the other hand, the study of the smoothing properties of semigroups generated by fractional Ornstein-Uhlenbeck operators has already been performed in the work \cite{AB} (Theorem 1.2) in a hypoelliptic setting, i.e. when the matrices $B$ and $Q$ satisfy an algebraic condition known as the Kalman rank condition and defined just after \eqref{22072019E1}. The present work aims at unifying these two results in a general setting for the matrices $B$ and $Q$, and for general positive real numbers $s>0$. An application to the study of the partial subelliptic estimates enjoyed by the operator $P$ on the whole space will also be given, generalizing results from the two same works \cite{AB2} (Example 2.11) and \cite{AB} (Corollary 1.15).

The Ornstein-Uhlenbeck operators (case $s=1$) and their associated semigroups have been very much studied in the last two decades. The structure of theses operators was analyzed in \cite{MR1289901}, while their spectral properties were investigated in \cite{MR1941990,  MR3342487}. The smoothing properties of the associated semigroups were studied in \cite{MR2505366, MR2257846, MR3710672,MR1389786,MR1941990, MR3342487} and some global hypoelliptic estimates were derived in \cite{MR2729292, MR2257846, MR3710672, MR3342487}. We also refer the reader to \cite{MR1343161, MR1475774} where the Ornstein-Uhlenbeck operators are studied while acting on spaces of continuous functions. We recall from these works that among the different characterizations of the hypoellipticity of these operators is the Kalman rank condition
\begin{equation}\label{22072019E1}
	\Rank\big[B\ \vert\ \sqrt Q\big] = n,
\end{equation}
where the $n\times n^2$ matrix 
$$\big[B\ \vert\ \sqrt Q\big] = \big[\sqrt Q,B\sqrt Q,\ldots,B^{n-1}\sqrt Q\big],$$
is obtained by writing consecutively the columns of the matrices $\sqrt Q, B\sqrt Q,\ldots,B^{n-1}\sqrt Q$, in addition to different assertions derived from the classical H\"ormander's work on the hypoellipticity of differential operators, see e.g. the introduction of \cite{AB}. The Kalman rank condition will naturally appear in the following.

\subsubsection*{Outline of the work} In Section \ref{results}, we present the main results contained in this work. Section \ref{reg} is devoted to the study of the smoothing properties of semigroups generated by fractional Ornstein-Uhlenbeck operators. Section \ref{Appendix} is an Appendix containing the proofs of some technical results.

\subsubsection*{Notations} The following notations and conventions will be used all over the work:
\begin{enumerate}[label=\textbf{\arabic*.},leftmargin=* ,parsep=2pt,itemsep=0pt,topsep=2pt]
\item The canonical Euclidean scalar product of $\mathbb R^n$ is denoted by $\langle\cdot,\cdot\rangle$ and $\vert\cdot\vert$ stands for the associated canonical Euclidean norm. The orthogonality with respect to this scalar product, as for it, is denoted $F^{\perp}$ for all $F\subset\mathbb R^n$.
\item The inner product of $L^2(\mathbb R^n)$ is defined for all $u,v\in L^2(\mathbb R^n)$ by
$$\langle u,v\rangle_{L^2} = \int_{\mathbb R^n}u(x)\overline{v(x)}\ \mathrm dx,$$
while $\Vert\cdot\Vert_{L^2}$ stands for the associated norm.
\item For all function $u\in\mathscr S(\mathbb R^n)$, the Fourier transform of $u$ is denoted by $\widehat u$ and defined by
$$\widehat u(\xi) = \int_{\mathbb R^n}e^{-i\langle x,\xi\rangle}u(x)\ \mathrm dx.$$
With this convention, Plancherel's theorem states that 
$$\forall u\in\mathscr S(\mathbb R^n),\quad \Vert\widehat u\Vert^2_{L^2} = (2\pi)^n\Vert u\Vert^2_{L^2}.$$
\item The Japanese bracket $\langle\cdot\rangle$ is defined for all $\xi\in\mathbb R^n$ by $\langle\xi\rangle = \sqrt{1+\vert\xi\vert^2}$.
\item For all continuous function $F:\mathbb R^n\rightarrow\mathbb C$, the notation $F(D_x)$ is used to denote the Fourier multiplier associated with the symbol $F(\xi)$.
\end{enumerate}

\section{Main results}\label{results}

This section is devoted to present the main results contained in this work.

\subsection{Regularizing effects} We begin by sharply describing the smoothing properties of the evolution operators $e^{-t\mathcal P}$ generated by the fractional Ornstein-Uhlenbeck operator $\mathcal P$. To that end, we consider the finite-dimensional vector space $S\subset\mathbb R^n$ defined by the following intersection of kernels
\begin{equation}\label{032019E2}
	S = \bigcap_{j=0}^{n-1}\Ker\big(\sqrt Q(B^T)^j\big).
\end{equation}
The structure of this vector space $S$, intrinsically linked to the matrices $B$ and $Q$, will play a key role in the study of fractional Ornstein-Uhlenbeck operators and their associated semigroups. Among others, we will prove that its canonical Euclidean orthogonal contains the smoothing directions of the operators $e^{-t\mathcal P}$, i.e. $\Sigma = S^{\perp}$ is the unique maximal vector space answering the problem presented in the introduction. By definition, we may consider the smallest integer $0\le r\le n-1$ satisfying
\begin{equation}\label{22072019E4}
	S = \bigcap_{j=0}^r\Ker\big(\sqrt Q(B^T)^j\big).
\end{equation}
This integer $0\le r \le n-1$ will also play a key role in the following.

First, we begin by checking that any smoothing direction of the operators $e^{-t\mathcal P}$ is contained in $S^{\perp}$ the canonical Euclidean orthogonal of the vector space $S$.

\begin{thm}\label{17072019T1} Let $\mathcal P$ be the fractional Ornstein-Uhlenbeck operator defined in \eqref{22072019E2} and equipped with the domain \eqref{17062020E2}. We consider $S$ the vector space associated with the matrices $B$ and $Q$ defined in \eqref{032019E2}. If there exist $t>0$ and $\xi_0\in\mathbb R^n$ such that the operator $\langle\xi_0,\nabla_x\rangle e^{-t\mathcal P}$ is bounded on $L^2(\mathbb R^n)$, then $\xi_0\in S^{\perp}$.
\end{thm}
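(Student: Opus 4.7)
The plan is to work on the Fourier side, where the semigroup $(e^{-t\mathcal{P}})_{t\ge0}$ admits an explicit representation. Since the Fourier transform turns $\langle Bx,\nabla_x\rangle$ into $-\Tr(B)-\langle B^T\xi,\nabla_\xi\rangle$ and $\tfrac12\Tr^s(-Q\nabla_x^2)$ into multiplication by $\tfrac12\langle Q\xi,\xi\rangle^s$, solving the resulting first-order transport equation along the characteristics $\xi(\tau)=e^{-\tau B^T}\xi_0$ should yield
\begin{equation*}
\widehat{e^{-t\mathcal{P}}u}(\xi)=e^{t\Tr(B)}\exp\!\Bigl(-\tfrac12\int_0^t|\sqrt Q\,e^{sB^T}\xi|^{2s}\,ds\Bigr)\,\widehat u(e^{tB^T}\xi).
\end{equation*}
(A brief check that this formula is indeed the action of the semigroup constructed in \cite{AB} would be the first step.)

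Next, I would translate the hypothesis into a multiplier statement. The $L^2$-boundedness of $\langle\xi_0,\nabla_x\rangle e^{-t\mathcal{P}}$ together with Plancherel means that $\xi\mapsto\langle\xi_0,\xi\rangle\,\widehat{e^{-t\mathcal{P}}u}(\xi)$ defines a bounded operator from $L^2$ to $L^2$, and after the change of variables $\eta=e^{tB^T}\xi$ (which has nonzero Jacobian) this is equivalent to the essential boundedness on $\mathbb R^n$ of the symbol
\begin{equation*}
m(\eta)=\bigl|\langle e^{-tB}\xi_0,\eta\rangle\bigr|\exp\!\Bigl(-\tfrac12\int_0^t|\sqrt Q\,e^{-\tau B^T}\eta|^{2s}\,d\tau\Bigr).
\end{equation*}

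The heart of the argument is then a contradiction: I assume $\xi_0\notin S^{\perp}$ and produce a sequence of points where $m$ blows up. A short linear-algebra observation is that $S$ is stable under $B^T$ (for $v\in S$ the only new vector $(B^T)^n v$ one needs to control is killed by $\sqrt Q$ by Cayley--Hamilton), hence $S^\perp$ is stable under $B$ and thus under $e^{-tB}$. Therefore $e^{-tB}\xi_0\notin S^\perp$, and we may decompose $e^{-tB}\xi_0=a+b$ with $a\in S^\perp$, $0\neq b\in S$. Testing along the ray $\eta=\lambda b$ with $\lambda>0$, one has $\langle e^{-tB}\xi_0,\lambda b\rangle=\lambda|b|^2$; on the other hand $b\in S$ and the $B^T$-stability of $S$ give $\sqrt Q\,e^{-\tau B^T}b=0$ for every $\tau$, so the exponential factor equals $1$. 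Hence $m(\lambda b)=\lambda|b|^2\to\infty$, contradicting $m\in L^\infty$.

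The main conceptual step is the derivation of the Fourier formula for $e^{-t\mathcal{P}}$ (including the justification that the hypothesis forces the boundedness of the symbol in the sense above, which uses only that $e^{-t\mathcal{P}}$ is the semigroup generated by the operator in \eqref{22072019E2}); the remaining linear-algebra trick of testing along a vector in $S$ is straightforward once the intrinsic role of the definition \eqref{032019E2} and its $B^T$-invariance are noticed. The rest is bookkeeping with the change of variables $\eta=e^{tB^T}\xi$.
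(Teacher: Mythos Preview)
Your proof is correct and follows essentially the same route as the paper: both exploit the explicit Fourier representation of $e^{-t\mathcal P}$, the $B^T$-invariance of $S$ (via Cayley--Hamilton), and the fact that the exponential weight is identically $1$ along directions in $S$ while the linear factor grows. The only cosmetic difference is that the paper keeps the factorization $e^{-t\mathcal P}=e^{-ta_t^w}e^{-t\langle Bx,\nabla_x\rangle}$ and tests the resulting Fourier multiplier against modulated Gaussians $u_\lambda(x)=e^{i\lambda\langle\xi_{0,S},x\rangle}e^{-|x|^2}$, whereas you perform the change of variables $\eta=e^{tB^T}\xi$ first and then argue by pointwise unboundedness of the continuous symbol $m$; your version is slightly more direct but the underlying idea is identical.
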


Notice that the directions of $\mathbb R^n$ in which the evolution operators $e^{-t\mathcal P}$ can be differentiated depend on the matrices $B$ and $Q$ but not on the positive real number $s>0$.

In order to sharply describe the constant $c_{t,m,\xi_1,\ldots,\xi_m}>0$ appearing in \eqref{17062020E1} with respect to the vectors $\xi_1,\ldots,\xi_m$ in short times $0<t\ll1$, we need to introduce the notion of index of any vector of $S^{\perp}$. To that end, we consider the finite-dimensional vector spaces $V_0,\ldots,V_r\subset\mathbb R^n$ defined for all $k\in\{0,\ldots,r\}$ by
\begin{equation}\label{18072019E3}
	V_k = \bigcap_{j=0}^k\Ker\big(\sqrt Q(B^T)^j\big).
\end{equation}
Notice that the orthogonal complement of these vector spaces $V^{\perp}_0,\ldots, V^{\perp}_r$ form an increasing family which satisfies, according to \eqref{22072019E4},
\begin{equation}\label{18072019E4}
	V_0^{\perp}\subsetneq\ldots\subsetneq V_r^{\perp} = S^{\perp}.
\end{equation}
This stratification of $S^{\perp}$ allows to define the \textit{index} of any vector $\xi_0\in S^{\perp}$ by
\begin{equation}\label{18072019E2}
	k_{\xi_0} = \min\big\{0\le k\le r : \xi_0\in V^{\perp}_k\big\}.
\end{equation}
This notion of index plays a key role in the understanding of the blow-up for short times of the seminorms associated with the smoothing effects of the evolution operators $e^{-t\mathcal P}$, as illustrated in the following theorem which is the main result contained in this work.

\begin{thm}\label{09072019T1} Let $\mathcal P$ be the fractional Ornstein-Uhlenbeck operator defined in \eqref{22072019E2} and equipped with the domain \eqref{17062020E2}. We consider $S$ the vector space associated with the matrices $B$ and $Q$, and $0\le r\le n-1$ the smallest integer such that \eqref{22072019E4} holds. There exist some positive constants $c>1$ and $T>0$ such that for all $m\geq1$, $\xi_1,\ldots,\xi_m\in S^{\perp}$, $0<t<T$ and $u\in L^2(\mathbb R^n)$,
$$\big\Vert\langle\xi_1,\nabla_x\rangle\ldots\langle\xi_m,\nabla_x\rangle e^{-t\mathcal P}u\big\Vert_{L^2}\le\frac {c^me^{\frac12\Tr(B)t}}{t^{k_{\xi_1}+\ldots+k_{\xi_m}+\frac m{2s}}}\ \bigg(\prod_{j=1}^m\vert\xi_j\vert\bigg)\ (m!)^{\frac 1{2s}}\ \Vert u\Vert_{L^2},$$
where $0\le k_{\xi_j}\le r$ denotes the index of the vector $\xi_j\in S^{\perp}$.
\end{thm}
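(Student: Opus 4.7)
The plan is to reduce the estimate to a uniform pointwise bound on the Fourier side, and then exploit the algebraic structure of $S^{\perp}$ through the filtration $V_0^{\perp}\subsetneq\cdots\subsetneq V_r^{\perp}=S^{\perp}$.

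First I would derive an explicit Fourier representation of the semigroup. Conjugation by the Fourier transform gives
$$\widehat{\mathcal P u}(\xi) = \tfrac12\langle Q\xi,\xi\rangle^s\widehat u(\xi) - \Tr(B)\widehat u(\xi) - \langle B^T\xi,\nabla_{\xi}\widehat u(\xi)\rangle,$$
and integration along the characteristic flow $\dot\xi=-B^T\xi$ yields
$$\widehat{e^{-t\mathcal P} u}(\xi) = e^{t\Tr(B)}\exp\!\Bigl(-\tfrac12\int_0^t\bigl|\sqrt Q\,e^{\sigma B^T}\xi\bigr|^{2s}\,d\sigma\Bigr)\,\widehat u(e^{tB^T}\xi).$$
Applying Plancherel to $\langle\xi_1,\nabla_x\rangle\cdots\langle\xi_m,\nabla_x\rangle e^{-t\mathcal P}u$ and performing the change of variable $\eta=e^{tB^T}\xi$ reduces the theorem to the uniform pointwise bound
$$\prod_{j=1}^m\bigl|\langle e^{-tB}\xi_j,\eta\rangle\bigr|^2\,\exp\!\Bigl(-\int_0^t\bigl|\sqrt Q\,e^{-\sigma B^T}\eta\bigr|^{2s}\,d\sigma\Bigr) \le \frac{c^{2m}(m!)^{1/s}}{t^{2\sum_j k_{\xi_j}+m/s}}\prod_{j=1}^m|\xi_j|^2,$$
valid for $\eta\in\mathbb R^n$ and $0<t<T$, the factor $e^{t\Tr(B)/2}$ appearing as the Jacobian of the substitution.

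The main technical obstacle is a polynomial-type lemma stating that there exist $C,T>0$ such that for every $k\in\{0,\dots,r\}$,
$$t^{2sk+1}\,\bigl|\sqrt Q\,(B^T)^k\eta\bigr|^{2s} \le C \int_0^t\bigl|\sqrt Q\,e^{-\sigma B^T}\eta\bigr|^{2s}\,d\sigma,\qquad 0<t<T,\ \eta\in\mathbb R^n.$$
For $s=1$ this is a classical positive-definite Gramian computation; for general $s>0$ it requires rescaling $\sigma=tu$ together with equivalence of $L^p$-norms on the finite-dimensional space parametrized by the Taylor data $\bigl(\sqrt Q(B^T)^k\eta\bigr)_{0\le k\le r}$, and I expect this lemma to be isolated in the Appendix. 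Granting it, I use the identity $V_{k_{\xi_j}}^{\perp}=\sum_{\ell=0}^{k_{\xi_j}}B^\ell\,\Ran(\sqrt Q)$ to decompose $\xi_j=\sum_{\ell=0}^{k_{\xi_j}}B^\ell\sqrt Q\,w_{j,\ell}$ with $|w_{j,\ell}|\le C|\xi_j|$, and Taylor-expand $e^{-tB^T}\eta$:
$$\langle e^{-tB}\xi_j,\eta\rangle = \sum_{\ell=0}^{k_{\xi_j}}\sum_{p\ge 0}\frac{(-t)^p}{p!}\,\langle w_{j,\ell},\sqrt Q\,(B^T)^{p+\ell}\eta\rangle.$$
For indices with $p+\ell\le r$ the lemma gives $\bigl|\sqrt Q(B^T)^{p+\ell}\eta\bigr|\le C t^{-(p+\ell)-1/(2s)}N(t,\eta)^{1/(2s)}$, with $N(t,\eta):=\int_0^t|\sqrt Q e^{-\sigma B^T}\eta|^{2s}\,d\sigma$; for $p+\ell>r$ one exploits that $V_r=S$ is $B^T$-invariant (a consequence of the minimality of $r$, which forces $V_{r+1}=V_r$ and hence $B^T V_r\subset V_r$), so each $\sqrt Q(B^T)^k$ factors through $\mathbb R^n/S$ and finite-dimensional norm equivalence yields $\bigl|\sqrt Q(B^T)^k\eta\bigr|\le C\sum_{j\le r}\bigl|\sqrt Q(B^T)^j\eta\bigr|$. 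Summing the convergent series and retaining the dominant exponent $\ell=k_{\xi_j}$ produces
$$\bigl|\langle e^{-tB}\xi_j,\eta\rangle\bigr|^{2s} \le C|\xi_j|^{2s}\,t^{-2sk_{\xi_j}-1}\,N(t,\eta).$$

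Raising this inequality to the power $1/s$, multiplying over $j=1,\dots,m$ and absorbing the resulting factor $N(t,\eta)^{m/s}$ against the exponential weight through the elementary inequality $y^{m/s}e^{-y}\le\Gamma(m/s+1)\le c^m(m!)^{1/s}$ (which follows from Stirling's formula) produces the required pointwise bound. Note that the case $\eta\in S$ is trivial: since $B^T S\subset S$ and $\xi_j\in S^{\perp}$, one has $\langle e^{-tB}\xi_j,\eta\rangle = \langle\xi_j,e^{-tB^T}\eta\rangle=0$. Integrating against $(2\pi)^{-n}|\widehat u(\eta)|^2$ and taking the square root finishes the proof of the theorem.
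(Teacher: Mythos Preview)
Your argument is correct and rests on the same ingredients as the paper's: the explicit Fourier representation of $e^{-t\mathcal P}$, a lower bound on the exponential weight in terms of the quantities $\bigl|\sqrt Q(B^T)^k\eta\bigr|$ via norm equivalence on a finite-dimensional space of polynomials, and the stratification of $S^\perp$ by the $V_k^\perp$. The differences are purely tactical. First, the paper exploits the factorization $e^{-t\mathcal P}=e^{-ta_t^w}\,e^{-t\langle Bx,\nabla_x\rangle}$ and treats the transport factor as a similitude of norm $e^{\frac12\Tr(B)t}$ at the very end; this keeps the symbol side in the original variable $\xi$, so one works directly with $\langle\xi_j,\xi\rangle$ and a one-line orthogonal-projection argument gives $|\langle\xi_j,\xi\rangle|\le C|\xi_j|\sum_{k\le k_{\xi_j}}|\sqrt Q(B^T)^k\xi|$, bypassing your Taylor expansion of $e^{-tB^T}\eta$ and the associated tail-sum bookkeeping (which does converge, but needs the growth control $\|\sqrt Q(B^T)^k\|\le C\|B\|^k$). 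Second, for the $m$-dependence the paper writes $e^{-ta_t^w}=(e^{-(t/m)a_t^w})^m$, bounds a single factor $\langle\xi_j,\nabla_x\rangle e^{-(t/m)a_t^w}$, and multiplies, producing $m^{m/(2s)}$ from the $(t/m)^{-1/(2s)}$ in each factor; your route via $N^{m/s}e^{-N}\le (m/(es))^{m/s}$ is the pointwise counterpart and gives the same $(m!)^{1/(2s)}$ after Stirling. Finally, your $L^{2s}$ lower-bound lemma is what the paper obtains by combining Proposition~\ref{18072019P3} (the $L^2$ lower bound) with Proposition~\ref{18072019P6} (the comparison constant $M_t$ between the $L^2$ and $L^{2s}$ integrals). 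The paper's organization is somewhat cleaner, but your proof stands.
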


This result shows that the semigroup $(e^{-t\mathcal P})_{t\geq0}$ enjoys partial Gevrey-type smoothing properties, the short-time asymptotics of $m$ differentiations of this semigroup in the directions generated by the vectors $\xi_1,\ldots,\xi_m\in S^{\perp}$ being given by $\mathcal O(t^{-k_{\xi_1}-\ldots-k_{\xi_m}-\frac m{2s}})$, and therefore depend on the indexes $k_{\xi_1},\ldots,k_{\xi_m}$ of those vectors. The justification of the terminology {\og}partial Gevrey-type smoothing properties{\fg} is the following. In the particular case where the vector space $S$ is reduced to $\{0\}$, i.e. when the Kalman rank condition \eqref{22072019E1} holds according to Lemma \ref{18072019P3}, Theorem \ref{09072019T1} implies that the evolution operators $e^{-t\mathcal P}$ can be differentiated in any direction of $\mathbb R^n$. The behavior of the estimates given by Theorem \ref{09072019T1} with respect to $m\geq1$ shows that the semigroup $(e^{-t\mathcal P})_{t\geq0}$ enjoys Gevrey-type regularity for all $t>0$, i.e. Gevrey regularity when $0<s<1/2$, analytic regularity when $s=1/2$ and ultra-analytic regularity when $s>1/2$. In the general case, the evolution operators $e^{-t\mathcal P}$ can be differentiated only in specific directions of $\mathbb R^n$ and one can say that the semigroup $(e^{-t\mathcal P})_{t\geq0}$ only enjoys {\og}partial Gevrey-type regularity{\fg} for all $t>0$.

Theorem \ref{09072019T1} is the exact extension of \cite{AB2} (Example 2.11) in the particular case of semigroups generated by Ornstein-Uhlenbeck operators (case $s=1$). More generally, one of the objectives of the work \cite{AB2} is to describe the smoothing properties of the evolution operators $e^{-tq^w}$ generated by accretive quadratic operators $q^w(x,D_x)$. Theorem 2.6 in \cite{AB2}, which is quite a lot similar to Theorem \ref{09072019T1}, states that there exist some positive constants $c>1$ and $T>0$ such that for all $0<t\le T$, $m\geq1$, $X_1,\ldots,X_m\in S^{\perp}$ and $u\in L^2(\mathbb R^n)$,
$$\big\Vert\langle X_1,X\rangle^w\ldots\langle X_m,X\rangle^we^{-tq^w}u\big\Vert_{L^2}\le\frac {c^m}{t^{k_{\xi_1}+\ldots+k_{\xi_m}+\frac m2}}\ \bigg(\prod_{j=1}^m\vert\xi_j\vert\bigg)\ \sqrt{m!}\ \Vert u\Vert_{L^2},$$
with
$$\langle X_j,X\rangle^w = \langle x_j,x\rangle + \langle\xi_j,D_x\rangle,\quad  X_j = (x_j,\xi_j)\in S^{\perp},\quad D_x = -i\nabla_x,$$
where $S\subset \mathbb R^{2n}$ is a vector subspace of the phase space intrinsically linked the accretive quadratic operator $q^w(x,D_x)$, and $k_{X_1},\ldots,k_{X_m}$ are the indexes of the vectors $X_1,\ldots,X_m\in S^{\perp}$, similarly defined than \eqref{18072019E2}. These estimates for accretive quadratic semigroups are obtained by exploiting the polar decomposition of the evolution operators $e^{-tq^w}$, i.e. a splitting formula for these operators as the product of a selfadjoint operator and a unitary operator in $L^2(\mathbb R^n)$, which the main result of the paper \cite{AB2} (Theorem 2.1). The strength of Theorem \ref{09072019T1} is to obtain the same kind of estimates for all $s>0$ and to track the various constants with respect to this parameter. This theorem will be proven by using the following explicit formula for the operators $e^{-t\mathcal P}$, coming from the work \cite{AB} (Theorem 1.1), which can also be seen as a polar decomposition:
$$e^{-t\mathcal P} = \exp\bigg(-\frac t2\int_0^1\big\vert\sqrt Qe^{\alpha tB^T}D_x\big\vert^{2s}\ \mathrm d\alpha\bigg)\ e^{-t\langle Bx,\nabla_x\rangle}.$$
The strategy of using representations of the above type has already been used in the literature. For example, in the work \cite{MR3906169}, it allowed to derive some $L^p$ estimates for the maximal fractional regularity for degenerate non-local Kolmogorov operators.

Assuming that the vector space $S$ is reduced to zero anew, we notice that Theorem \ref{09072019T1} and a straightforward induction imply in particular that for all $\alpha\in\mathbb N^n$, $0<t<T$ and $u\in L^2(\mathbb R^n)$,
\begin{equation}\label{09092019E3}
	\big\Vert\partial^{\alpha}_x(e^{-t\mathcal P}u)\big\Vert_{L^2(\mathbb R^n)}\le\frac{c^{1+\vert\alpha\vert}}{t^{\vert\alpha\vert(\frac1{2s}+r)}}\ (\alpha!)^{\frac1{2s}}\ \Vert u\Vert_{L^2(\mathbb R^n)},
\end{equation}
since the index of any vector $\xi_0\in S^{\perp}$ satisfies $0\le k_{\xi_0}\le r$. These Gevrey-type regularizing effect estimates have already been proven in \cite{AB} (Theorem 1.1), with the same blow-up of the associated seminorms for short times $t\rightarrow0^+$.

In a general setting on the matrices $B$ and $Q$, Theorem \ref{09072019T1} does not provide any long time behavior of the norm of the linear operators $\langle\xi_1,\nabla_x\rangle\ldots\langle\xi_m,\nabla_x\rangle e^{-t\mathcal P}$. However, this long-time study can be performed in particular cases. For example, notice from Plancherel's theorem and a straightforward study of function that for all $m\geq1$, $\xi_1,\ldots,\xi_m\in\mathbb R^n$, $t>0$ and $u\in L^2(\mathbb R^n)$,
$$\big\Vert\langle\xi_1,\nabla_x\rangle\ldots\langle\xi_m,\nabla_x\rangle e^{-t(-\Delta_x)^s}u\big\Vert_{L^2}\le\frac {2^{-\frac m{2s}}}{t^{\frac m{2s}}}\ \bigg(\prod_{j=1}^m\vert\xi_j\vert\bigg)\ (m!)^{\frac 1{2s}}\ \Vert u\Vert_{L^2}.$$
We can therefore completely describe the long-time asymptotics of the smoothing properties of semigroups generated by fractional Laplacians. By adapting the proof of Theorem \ref{09072019T1}, one can obtain estimates in the more general case where the matrix $B$ is nilpotent, which apply in particular to semigroups generated by fractional Kolmogorov operators, some operators naturally appearing in the theory of kinetic equations defined as follows. Given $s>0$, the associated fractional Kolmogorov operator $\mathcal K$ is the operator defined by
\begin{equation}\label{22062020E7}
	\mathcal K = (-\Delta_v)^s + v\cdot\nabla_x,\quad(x,v)\in\mathbb R^{2n}.
\end{equation}
This is the fractional Ornstein-Uhlenbeck operator associated with the matrices $B$ and $Q$ respectively given by
$$B = \begin{pmatrix}
	0_n & I_n \\
	0_n & 0_n
\end{pmatrix}\quad \text{and}\quad Q = 2^{\frac1s}\begin{pmatrix}
	0_n & 0_n \\
	0_n & I_n
\end{pmatrix}.$$
We study the operator $\mathcal K$ equipped with the domain
\begin{equation}\label{22062020E8}
	D(\mathcal K) = \big\{u\in L^2(\mathbb R^{2n}) : \mathcal Ku\in L^2(\mathbb R^{2n})\big\}.
\end{equation}
A straightforward calculus shows that the vector space $S$ associated with the matrices $B$ and $Q$ is given by
$$S = \Ker\big(\sqrt Q\big)\cap\Ker\big(\sqrt QB^T\big) = \{0\}.$$
The associated integer $0\le r\le 2n-1$ defined in \eqref{22072019E4} is therefore equal to $1$. The smoothing properties of the semigroup generated by the operator $\mathcal K$ are presented in the following result.

\begin{prop}\label{22062020P1} Let $\mathcal K$ be the fractional Kolmogorov operator defined in \eqref{22062020E7} and equipped with the domain \eqref{22062020E8}. There exists a positive constant $c>1$ such that for all $(\alpha,\beta)\in\mathbb N^{2n}$, $t>0$ and $u\in L^2(\mathbb R^{2n})$,
$$\big\Vert\partial^{\alpha}_x\partial^{\beta}_v(e^{-t\mathcal K}u)\big\Vert_{L^2}\le\frac{c^{\vert\alpha\vert+\vert\beta\vert}}{t^{(1+\frac1{2s})\vert\alpha\vert
+ \frac{\vert\beta\vert}{2s}}}\ (\alpha!)^{\frac1{2s}}\ (\beta!)^{\frac1{2s}}\ \Vert u\Vert_{L^2}.$$
\end{prop}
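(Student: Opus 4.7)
The plan is to derive the estimate globally in $t$ directly from the explicit Fourier multiplier representation of $e^{-t\mathcal K}$ recalled in the introduction, rather than attempting to extend Theorem \ref{09072019T1} beyond its short-time regime. For the matrices $B$ and $Q$ associated with $\mathcal K$, the nilpotency $(B^T)^2 = 0$ gives $e^{atB^T} = I + atB^T$, and a direct computation yields
$$\big\vert\sqrt Q e^{atB^T}(\xi,\eta)\big\vert^{2s} = 2\,\vert\eta + at\xi\vert^{2s}.$$
Combining the representation formula with Plancherel's theorem, and using that the transport part $e^{-t\langle Bx,\nabla_x\rangle}$ acts isometrically on $L^2(\mathbb R^{2n})$ (the associated flow being measure-preserving, since $\Div(Bx) = \Tr(B) = 0$), I would reduce the problem to the pointwise bound
$$\sup_{(\xi,\eta)\in\mathbb R^{2n}} \vert\xi\vert^{\vert\alpha\vert}\vert\eta\vert^{\vert\beta\vert}\exp\bigg(-t\int_0^1 \vert\eta+at\xi\vert^{2s}\,\mathrm da\bigg) \le \frac{c^{\vert\alpha\vert+\vert\beta\vert}}{t^{(1+\frac1{2s})\vert\alpha\vert + \frac{\vert\beta\vert}{2s}}}\,(\alpha!)^{\frac1{2s}}(\beta!)^{\frac1{2s}}.$$

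The key lemma is the lower bound
$$\int_0^1 \vert\eta + at\xi\vert^{2s}\,\mathrm da \ge c_s\big(\vert\eta\vert^{2s} + t^{2s}\vert\xi\vert^{2s}\big),$$
uniform in $(\xi,\eta)\in\mathbb R^{2n}$ and $t>0$. Setting $\omega = t\xi$, the left-hand side is $2s$-homogeneous in $(\eta,\omega)$, so it suffices to bound it below by a positive constant on the compact unit sphere $\{\vert\eta\vert^2 + \vert\omega\vert^2 = 1\}$. Continuity is clear, and the integral vanishes only if $\eta + a\omega = 0$ for almost every $a\in[0,1]$, which forces $\eta = \omega = 0$ and contradicts the sphere condition; the claim follows by a compactness argument.

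With this lower bound in hand, the exponential factor splits as $e^{-c_s t\vert\eta\vert^{2s}}e^{-c_s t^{2s+1}\vert\xi\vert^{2s}}$, and it only remains to optimize $r^M e^{-\kappa r^{2s}}$ in $r\ge 0$. The elementary identity $\sup_{r\ge 0} r^M e^{-\kappa r^{2s}} = (M/(2s\kappa e))^{M/(2s)}$, combined with Stirling's inequality $M^M \le e^M M!$ and the standard bounds $\vert\alpha\vert! \le n^{\vert\alpha\vert}\alpha!$ and $\vert\beta\vert! \le n^{\vert\beta\vert}\beta!$, yields the claimed estimate upon applying the lower bound with $\kappa = c_s t^{2s+1}$ to control the $\vert\xi\vert^{\vert\alpha\vert}$ factor and $\kappa = c_s t$ to control the $\vert\eta\vert^{\vert\beta\vert}$ factor. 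The main obstacle is the lower bound above; all remaining steps are routine Fourier analysis and elementary optimization. Alternatively, for the short-time regime $0<t<T$, the estimate also follows immediately from Theorem \ref{09072019T1} applied with $\xi_1,\ldots,\xi_m$ taken as canonical basis vectors of $\mathbb R^{2n}$, observing that the indices defined in \eqref{18072019E2} satisfy $k_{\xi_j} = 1$ for an $x$-direction and $k_{\xi_j} = 0$ for a $v$-direction, and that the exponential prefactor $e^{\frac12\Tr(B)t}$ in that theorem is trivial since $B$ is nilpotent.
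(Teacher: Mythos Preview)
Your proof is correct. Both your argument and the paper's rest on the same two ingredients---a compactness lower bound on the symbol and the elementary optimization of $r^M e^{-\kappa r^{2s}}$ together with $m^m\le e^m m!$---but they are organized differently. The paper does not work with the explicit Kolmogorov symbol; instead it invokes the general nilpotent case of Propositions~\ref{18072019P3} and \ref{18072019P6} in the Appendix to obtain, for all $t>0$, the estimate
$$\big\Vert\big\vert\sqrt Q(B^T)^kD_x\big\vert^qe^{-\tau a^w_t}u\big\Vert_{L^2}\le\Big(\frac{c_1}{t^k\tau^{1/(2s)}}\Big)^q q^{q/(2s)}\Vert u\Vert_{L^2},$$
specializes this to $k=0,1$ for the Kolmogorov matrices, and then handles the mixed derivatives $\vert D_x\vert^{q_1}\vert D_v\vert^{q_2}$ by the semigroup splitting $e^{-ta^w_t}=e^{-\frac t2 a^w_t}e^{-\frac t2 a^w_t}$ rather than by splitting the exponential pointwise as you do. Your route is more self-contained for this particular operator: the explicit formula $\vert\sqrt Q e^{atB^T}(\xi,\eta)\vert^{2s}=2\vert\eta+at\xi\vert^{2s}$ makes the compactness step a one-liner and avoids appealing to the general apparatus. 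The paper's route, on the other hand, shows that the result is a direct instance of the nilpotent framework, which is what one would reuse for other nilpotent $B$. Your final remark on recovering the short-time case from Theorem~\ref{09072019T1} with indices $k_{\xi_j}=1$ for $x$-directions and $k_{\xi_j}=0$ for $v$-directions is also correct.
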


The long-time study of the smoothing properties of semigroups generated by fractional Ornstein-Uhlenbeck operators is an interesting question that remains open.

\subsection{Subelliptic estimates}\label{subelliptic} The study of the smoothing properties of semigroups generated by fractional Ornstein-Uhlenbeck operators allows to obtain partial subelliptic estimates enjoyed by these operators on the whole space.

\begin{thm}\label{03072019T2} Let $\mathcal P$ be the fractional Ornstein-Uhlenbeck operator defined in \eqref{22072019E2} and equipped with the domain \eqref{17062020E2}. We consider $0\le r\le n-1$ the smallest integer such that \eqref{22072019E4} holds. There exists a positive constant $c>0$ such that for all $u\in D(\mathcal P)$,
$$\sum_{k=0}^r\big\Vert\langle\sqrt Q(B^T)^kD_x\rangle^{\frac{2s}{1+2ks}}u\big\Vert_{L^2}\le c\big(\Vert\mathcal Pu\Vert_{L^2} + \Vert u\Vert_{L^2}\big).$$
\end{thm}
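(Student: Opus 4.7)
The strategy is to bootstrap the subelliptic estimate from the semigroup smoothing of Theorem~\ref{09072019T1} via interpolation. Fix $k\in\{0,\ldots,r\}$. Since the vectors $B^k\sqrt Qe_j$, $j=1,\ldots,n$, all lie in $V_k^\perp$ (they are the columns of $(\sqrt Q(B^T)^k)^T$) and thus have index at most $k$, the identity
$$\bigl|\sqrt Q(B^T)^kD_x\bigr|^{2p} = \Bigl(\sum_{j=1}^n\langle B^k\sqrt Qe_j, D_x\rangle^2\Bigr)^p$$
writes $|\sqrt Q(B^T)^kD_x|^{2p}$ as a linear combination of $2p$-fold products of directional derivatives along $V_k^\perp$. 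Applying Theorem~\ref{09072019T1} term by term with $m=2p$, and combining with the trivial $L^2$-bound on $e^{-t\mathcal P}$ to convert $|\cdot|^{2p}$ into $\langle\cdot\rangle^{2p}$, gives for every positive integer $p$,
$$\bigl\|\langle\sqrt Q(B^T)^kD_x\rangle^{2p} e^{-t\mathcal P}\bigr\|_{L^2\to L^2}\le\frac{C_p}{t^{2p(k+\frac{1}{2s})}},\quad 0<t\le T.$$

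Interpolation to non-integer exponents is then carried out via a Heinz-type inequality for the positive self-adjoint Fourier multiplier $\Lambda_k:=\langle\sqrt Q(B^T)^kD_x\rangle$: for every $\gamma\in[0,2p]$,
$$\|\Lambda_k^\gamma e^{-t\mathcal P}\|_{L^2\to L^2}\le\|\Lambda_k^{2p} e^{-t\mathcal P}\|_{L^2\to L^2}^{\gamma/(2p)}\,\|e^{-t\mathcal P}\|_{L^2\to L^2}^{1-\gamma/(2p)}\le\frac{C_\gamma}{t^{\gamma(k+\frac{1}{2s})}},\quad 0<t\le T.$$
Setting $\alpha_k:=2s/(1+2ks)$, so that $\alpha_k(k+\frac{1}{2s})=1$, I next apply this fractional smoothing to the semigroup identity
$$u = e^{-T\mathcal P}u + \int_0^T e^{-s\mathcal P}\mathcal Pu\,ds,\qquad u\in D(\mathcal P),$$
and obtain, for every subcritical exponent $\gamma<\alpha_k$,
$$\|\Lambda_k^\gamma u\|_{L^2}\le\|\Lambda_k^\gamma e^{-T\mathcal P}u\|_{L^2}+\int_0^T\|\Lambda_k^\gamma e^{-s\mathcal P}\|_{L^2\to L^2}\,\|\mathcal Pu\|_{L^2}\,ds\le C_\gamma\bigl(\|u\|_{L^2}+\|\mathcal Pu\|_{L^2}\bigr),$$
the time integral converging precisely because $\gamma(k+\frac{1}{2s})<1$.

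The main technical obstacle is to reach the critical exponent $\gamma=\alpha_k$, where $\gamma(k+\frac{1}{2s})=1$: the time integral then diverges logarithmically and the above bootstrap breaks down. I would close the endpoint by a real interpolation argument, exploiting the subcritical estimates together with the semigroup characterization of the $K$-functional for the couple $(L^2,D(\mathcal P))$: the family of continuous inclusions $D(\mathcal P)\hookrightarrow D(\Lambda_k^\gamma)$ for $\gamma<\alpha_k$, whose operator norms grow in a controlled way as $\gamma\uparrow\alpha_k$ (precisely tracked by the Gevrey factor $(m!)^{1/(2s)}$ in Theorem~\ref{09072019T1}), extends by real interpolation to the endpoint inclusion $D(\mathcal P)\hookrightarrow D(\Lambda_k^{\alpha_k})$. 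Summation over $k=0,\ldots,r$ then yields the announced subelliptic estimate.
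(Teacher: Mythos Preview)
Your overall strategy---deduce subelliptic estimates from the semigroup smoothing via interpolation---is exactly the paper's approach, and your steps up to the subcritical exponent $\gamma<\alpha_k$ are correct. The difficulty you identify at the endpoint $\gamma=\alpha_k$ is genuine, but your proposed fix is where the argument has a gap. The claim that a family of inclusions $D(\mathcal P)\hookrightarrow D(\Lambda_k^\gamma)$ for $\gamma<\alpha_k$ with controlled blow-up ``extends by real interpolation to the endpoint inclusion'' is not a valid inference in general, and the Gevrey factor $(m!)^{1/(2s)}$ from Theorem~\ref{09072019T1} is irrelevant here: it tracks dependence on the number of derivatives $m$, not on the approach to the critical exponent.

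The paper resolves the endpoint issue by \emph{overshooting} rather than approaching from below. Since your Heinz-type inequality already gives
\[
\|\Lambda_k^q e^{-t\mathcal P}\|_{L^2\to L^2}\le\frac{C}{t^{\rho}},\qquad \rho=q\Big(k+\frac1{2s}\Big),
\]
for every $q>0$, one simply chooses $q$ large enough (e.g.\ $q=1+\lfloor 2s\rfloor$) so that $\rho>1$, and then applies the abstract interpolation result (Proposition~\ref{14062020P1}, quoted from \cite{MR3710672}): for a maximal accretive generator with $\|e^{-tA}\|_{X\to E}\le C t^{-\rho}$ and $\rho>1$, one has the continuous inclusion $D(A)\subset (X,E)_{1/\rho,2}$. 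Taking $X=L^2$ and $E=D(\Lambda_k^q)$, and identifying the real interpolation space $(L^2,D(\Lambda_k^q))_{1/\rho,2}=D(\Lambda_k^{q/\rho})=D(\Lambda_k^{\alpha_k})$ via a standard result (e.g.\ \cite{MR2523200}, Theorem~4.36), gives the endpoint directly. The condition $\rho>1$ is precisely what makes the abstract argument go through without any logarithmic divergence; your semigroup-identity approach with $\gamma<\alpha_k$ is essentially rediscovering the subcritical part of this interpolation theorem while missing the mechanism that delivers the endpoint.
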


The entire proof of Theorem \ref{03072019T2} will not be presented in the present paper since it is completely detailed in the author's thesis work \cite{A} (Chapter 4, Subsection 4.3). Nevertheless, let us present its main steps. Theorem \ref{03072019T2} can be directly obtained by exploiting the following estimates, established in \eqref{18062020E5} in Subsection \ref{refeffects} while proving Theorem \ref{09072019T1}, holding for all $k\in\{0,\ldots,r\}$, $q>0$, $0<t\ll1$ and $u\in L^2(\mathbb R^n)$,
\begin{equation}\label{22062020E1}
	\big\Vert\big\vert\sqrt Q(B^T)^kD_x\big\vert^qe^{-t\mathcal P}u\big\Vert_{L^2}\le\bigg(\frac c{t^{k+\frac1{2s}}}\bigg)^q\ e^{\frac12\Tr(B)t}\ q^{\frac q{2s}}\ \Vert u\Vert_{L^2},
\end{equation}
and elements of interpolation theory. Precisely, the link between the above smoothing estimates and the subelliptic estimates stated in Theorem \ref{03072019T2} is made by the following result

\begin{prop}[Proposition 2.7 in \cite{MR3710672}]\label{14062020P1} Let $X$ be a Hilbert space and $A:D(A)\subset X\rightarrow X$ be a maximal accretive operator such that $(-A,D(A))$ is the generator of a strongly continuous semigroup $(T(t))_{t\geq0}$. Assume that there exists a Banach space $E\subset X$, $\rho>1$ and $C>0$ such that
$$\forall t>0,\quad \Vert T(t)\Vert_{\mathcal L(X,E)}\le\frac C{t^{\rho}},$$
and that $t\mapsto T(t)u$ is measurable with values in $E$ for each $u\in X$. Then, the following continuous inclusion holds
$$D(A)\subset(X,E)_{\frac1\rho,2},$$
where $(X,E)_{\frac1\rho,2}$ denotes the space obtained by real interpolation.
\end{prop}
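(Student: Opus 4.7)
The plan is to establish the continuous embedding $D(A) \hookrightarrow (X, E)_{1/\rho, 2}$ via the $K$-method of real interpolation. Recall that $u \in (X, E)_{1/\rho, 2}$ if and only if
\[
\int_0^\infty \bigl(\tau^{-1/\rho}\, K(\tau, u)\bigr)^2 \, \frac{d\tau}{\tau} < +\infty,
\]
where $K(\tau, u) = \inf\{\|u_0\|_X + \tau \|u_1\|_E : u = u_0 + u_1,\ u_0 \in X,\ u_1 \in E\}$ is the $K$-functional.

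The key idea is to use the semigroup itself to generate a canonical decomposition of $u$. For $u \in D(A)$ and any $t > 0$, the splitting $u = (u - T(t)u) + T(t)\,u$ is admissible, and its two pieces are controlled independently as follows:
\begin{itemize}
\item The maximal accretivity of $A$ makes $(T(s))_{s \geq 0}$ a contraction semigroup on $X$; combined with the identity $u - T(t)u = \int_0^t T(s)(Au)\, ds$, valid for $u \in D(A)$, this yields $\|u - T(t)\,u\|_X \leq t\,\|Au\|_X$.
\item The smoothing hypothesis gives $\|T(t)\,u\|_E \leq C\, t^{-\rho}\, \|u\|_X$ directly.
\end{itemize}
Combining the two estimates, one obtains for every $\tau > 0$,
\[
K(\tau, u) \leq t\,\|Au\|_X + C\, \tau\, t^{-\rho}\, \|u\|_X \quad \text{for every } t > 0.
\]

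The main obstacle is to extract from these pointwise bounds the precise conclusion. A crude minimization of the right-hand side in $t$ as a function of $\tau$ yields only $K(\tau, u) \lesssim \tau^{1/(\rho+1)}$, which establishes merely $D(A) \hookrightarrow (X, E)_{1/(\rho+1), \infty}$, strictly weaker than the target. Recovering both the sharp exponent $1/\rho$ and the $L^2$ (rather than $L^\infty$) integrability requires a multi-scale refinement: write the telescoping identity
\[
u = T(1)\, u + \sum_{k \geq 0} \bigl(T(2^{-k-1})\, u - T(2^{-k})\, u\bigr),
\]
which converges in $X$ by strong continuity, and factor each block as $T(2^{-k-1})[u - T(2^{-k-1})\,u]$. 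Controlling every block simultaneously in $X$ (via the contraction of the semigroup together with the representation involving $Au$) and in $E$ (via the smoothing assumption applied to the outer semigroup factor), one then inserts the blocks into the dyadic $J$-representation of $(X, E)_{1/\rho, 2}$. The measurability hypothesis on $t \mapsto T(t)u$ with values in $E$ is precisely what legitimizes these integrated manipulations, and the $\ell^2$-summability of the resulting block norms matches the index $q = 2$ of the target interpolation space.
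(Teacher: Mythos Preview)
First, a logistical note: the paper does not give its own proof of this proposition. It is quoted from \cite{MR3710672} (their Proposition~2.7) and used as a black box to pass from the smoothing estimate \eqref{22062020E1} to the subelliptic estimates of Theorem~\ref{03072019T2}. There is therefore no in-paper argument to compare yours against.

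That said, your sketch has a genuine gap precisely at the step you flag as ``the main obstacle''. Carry your dyadic decomposition through with the only bounds actually available. For $u_k = T(2^{-k-1})\bigl(u - T(2^{-k-1})u\bigr)$ one gets
\[
\|u_k\|_X \le 2^{-k-1}\|Au\|_X,
\qquad
\|u_k\|_E \le C\,(2^{-k-1})^{-\rho}\cdot 2^{-k-1}\|Au\|_X = C\,2^{(k+1)(\rho-1)}\|Au\|_X.
\]
Assigning $u_k$ to the scale $s_k = 2^{-k\rho}$ in the discrete $J$-method balances the two terms and yields $J(s_k,u_k)\lesssim 2^{-k}\|Au\|_X$, hence $s_k^{-1/\rho}J(s_k,u_k)\lesssim \|Au\|_X$ \emph{uniformly in $k$}. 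That is an $\ell^\infty$ bound, not an $\ell^2$ one: the series $\sum_k\bigl(s_k^{-1/\rho}J(s_k,u_k)\bigr)^2$ diverges. The same obstruction persists on the $K$-side even after the natural refinement $\|T(t)u\|_E \lesssim t^{1-\rho}\|Au\|_X + \|u\|_X$ (obtained from $T(t)u = T(1)u + \int_t^1 T(s)Au\,ds$ in $E$); optimising then gives only $K(\tau,u)\lesssim \tau^{1/\rho}\|u\|_{D(A)}$, i.e.\ $D(A)\subset (X,E)_{1/\rho,\infty}$. Your final sentence asserts the $\ell^2$-summability but supplies no mechanism that would produce it.

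What is missing is any use of the hypothesis that $X$ is a \emph{Hilbert} space. Your argument treats $X$ as a mere Banach space throughout (contractivity and the integral identity for $u-T(t)u$ require nothing more), and in that generality the conclusion with $q=2$ is simply false; one only obtains $q=\infty$. The upgrade from $q=\infty$ to $q=2$ in \cite{MR3710672} must come from the Hilbert structure, and that is exactly the idea your proposal does not contain.
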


The estimate \eqref{22062020E1} used with $q = 1+\lfloor2s\rfloor$ and Proposition \ref{14062020P1} applied with the Hilbert spaces $X = L^2(\mathbb R^n)$ and
$$E = \mathscr H_k = \big\{u\in L^2(\mathbb R^n) : \Lambda_k^qu\in L^2(\mathbb R^n)\big\},\quad\text{with}\quad\Lambda_k = \langle\sqrt Q(B^T)^kD_x\rangle,$$
allow to obtain that for all $k\in\{0,\ldots,r\}$, the following continuous inclusion holds
\begin{equation}\label{22062020E2}
	D(\mathcal P)\subset(L^2(\mathbb R^n),\mathscr H_k)_{\theta,2}\quad\text{with}\quad\theta = \frac{2s}{q(1+2ks)}\in(0,1).
\end{equation}
By setting $\mathscr H_k$ as the domain of the Fourier multiplier $\Lambda_k^q$, another interpolation result, namely \cite{MR2523200} (Theorem 4.36), justifies that
\begin{equation}\label{22062020E3}
	(L^2(\mathbb R^n),\mathscr H_k)_{\theta,2} = (D((\Lambda_k^q)^0),D(\Lambda_k^q)^1))_{\theta,2} = D(\Lambda_k^{q\theta}).
\end{equation}
The combinaison of the continuous inclusion \eqref{22062020E2} and \eqref{22062020E3} ends the proof of Theorem \ref{03072019T2}. Notice that this proof follows line to line the one presented in Subsections 5.1 and 5.2 in \cite{AB}, where subelliptic estimates similar to the ones states in Theorem \ref{03072019T2} are established when the Kalman rank condition \eqref{22072019E1} holds. This strategy of proof, inspired by the work \cite{MR3710672}, is also used in the paper \cite{AB2} to establish subelliptic estimates enjoyed by accretive quadratic operators on the whole space, of which Ornstein-Uhlenbeck operators (case $s=1$) are a particular case, see \cite{AB2} (Example 2.11).

Let us assume that the Kalman rank condition \eqref{22072019E1} holds. In this case, the vector space $S$ is reduced to zero as we check in Lemma \ref{29082018E1} in Appendix, and it follows that
$$\forall\xi\in\mathbb R^n,\quad\vert\xi\vert^2\lesssim\sum_{k=0}^r\big\vert\sqrt Q(B^T)^k\xi\big\vert^2.$$
We therefore deduce from this estimate, Theorem \ref{03072019T2} and Plancherel's theorem that for all $u\in D(\mathcal P)$,
\begin{align*}
	\big\Vert\langle D_x\rangle^{\frac{2s}{1+2rs}}u\big\Vert_{L^2} 
	& \lesssim\sum_{k=0}^r\big\Vert\langle\sqrt Q(B^T)^k D_x\rangle^{\frac{2s}{1+2rs}}u\big\Vert_{L^2} \\
	& \lesssim\sum_{k=0}^r\big\Vert\langle\sqrt Q(B^T)^k D_x\rangle^{\frac{2s}{1+2ks}}u\big\Vert_{L^2} \lesssim \Vert\mathcal Pu\Vert_{L^2} + \Vert u\Vert_{L^2}.
\end{align*}
When $s\geq1$ is a positive integer, the operator $\mathcal P$ therefore enjoys a global subelliptic estimate on the whole space
$$\big\Vert\langle D_x\rangle^{2s(1-\delta)}u\big\Vert_{L^2}\lesssim\Vert \mathcal Pu\Vert_{L^2} + \Vert u\Vert_{L^2},$$
with a loss of 
$$\delta = \frac{2rs}{1+2rs}>0,$$ 
derivatives with respect to the elliptic case. The above estimate has already been established in \cite{AB} (Corollary 1.15).

\section{Regularizing effects of semigroups generated by fractional Ornstein-Uhlenbeck operators}\label{reg}

Let $\mathcal P$ be the fractional Ornstein-Uhlenbeck operator defined in \eqref{22072019E2} and equipped with the domain \eqref{17062020E2}. We consider the vector space $S\subset\mathbb R^n$ defined in \eqref{032019E2} and $0\le r\le n-1$ the smallest integer such that \eqref{22072019E4} holds. This section is devoted to the study of the smoothing properties of the semigroup $(e^{-t\mathcal P})_{t\geq0}$ generated by the operator $\mathcal P$. To that end, we will use the fact that the evolution operators $e^{-t\mathcal P}$ are explicitly given by the following explicit formula
\begin{equation}\label{18072019E1}
	e^{-t\mathcal P} = \exp\bigg(-\frac t2\int_0^1\big\vert\sqrt Qe^{\alpha tB^T}D_x\big\vert^{2s}\ \mathrm d\alpha\bigg)\ e^{-t\langle Bx,\nabla_x\rangle}.
\end{equation}
This formula is a straightforward reformulation of \cite{AB} (Theorem 1.1). Indeed, this result states that the evolution operators $e^{-t\mathcal P}$ are given through their Fourier transform for all $t\geq0$ by
$$\forall u\in L^2(\mathbb R^n),\quad \widehat{e^{-t\mathcal P}u} = \exp\bigg(-\frac12\int_0^t\big\vert\sqrt Qe^{\tau B^T}\cdot\big\vert^{2s}\ \mathrm d\tau\bigg)\ e^{\Tr(B)t}\ \widehat u(e^{tB^T}\cdot).$$
Moreover, $e^{-t\langle Bx,\nabla_x\rangle}u = u(e^{-tB}\cdot)$ for all $u\in L^2(\mathbb R^n)$ and a straightforward calculus of inverse Fourier transform shows that for all $\xi\in\mathbb R^n$,
$$\widehat{u(e^{-tB}\cdot)}(\xi) = \int_{\mathbb R^n}e^{-i\langle x,\xi\rangle}u(e^{-tB}x)\ \mathrm dx = e^{\Tr(B)t}\ \widehat u(e^{tB^T}\xi).$$
This proves that formula \eqref{18072019E1} actually holds. It shows that the operators $e^{-t\mathcal P}$ can be split as the product of a selfadjoint Fourier multiplier and an operator which is a similitude on $L^2(\mathbb R^n)$, since
\begin{equation}\label{18062020E3}
	\forall u\in L^2(\mathbb R^n),\quad\big\Vert e^{-t\langle Bx,\nabla_x\rangle}u\big\Vert_{L^2} = \big\Vert u(e^{-tB}\cdot)\big\Vert_{L^2(\mathbb R^n)} = e^{\frac12\Tr(B)t}\ \Vert u\Vert_{L^2}.
\end{equation}
The splitting formula \eqref{18072019E1} can somehow be seen as the polar decomposition of the evolution operators $e^{-t\mathcal P}$.

\subsection{Smoothing directions} The first part of this section is devoted to the proof of Theorem \ref{17072019T1}. Let $t>0$ and $\xi_0\in\mathbb R^n$. We assume that the linear operator $\langle\xi_0,\nabla_x\rangle e^{-t\mathcal P}$ is bounded on $L^2(\mathbb R^n)$. We aim at proving that $\xi_0\in S^{\perp}$. Let us introduce the time-dependent real-valued symbol $a_t$ defined for all $\xi\in\mathbb R^n$ by
\begin{equation}\label{18072019E14}
	a_t(\xi) = \frac12\int_0^1\big\vert\sqrt Qe^{\alpha tB^T}\xi\big\vert^{2s}\ \mathrm d\alpha.
\end{equation}
With this notation, the polar decomposition \eqref{18072019E1} can be written in the following way
\begin{equation}\label{18072019E18}
	e^{-t\mathcal P} = e^{-ta^w_t}e^{-t\langle Bx,\nabla_x\rangle},
\end{equation}
where $e^{-ta^w_t}$ denotes the time-dependent Fourier multiplier associated with the time-dependent real-valued symbol $e^{-ta_t}$. We first notice from the splitting formula \eqref{18072019E18} that the boundedness on $L^2(\mathbb R^n)$ of the operator $\langle\xi_0,\nabla_x\rangle e^{-t\mathcal P}$ is equivalent to the boundedness of the Fourier multiplier $\langle\xi_0,\nabla_x\rangle e^{-ta^w_t}$, since the evolution operator $e^{-t\langle Bx,\nabla_x\rangle}$ is invertible on $L^2(\mathbb R^n)$. As a consequence, there exists a positive constant $c_{t,\xi_0}>0$ depending on $t>0$ and $\xi_0\in\mathbb R^n$ such that
\begin{equation}\label{18072019E11}
	\forall u\in L^2(\mathbb R^n),\quad \big\Vert\langle\xi_0,\nabla_x\rangle e^{-ta^w_t}u\big\Vert_{L^2}\le c_{t,\xi_0}\Vert u\Vert_{L^2}.
\end{equation}
According to the orthogonal decomposition $\mathbb R^n = S\oplus S^{\perp}$, we write $\xi_0 = \xi_{0,S} + \xi_{0,S^{\perp}}$, with $\xi_{0,S}\in S$ and $\xi_{0,S^{\perp}}\in S^{\perp}$. For all $\lambda\geq0$, we consider the Gaussian function $u_{\lambda}\in\mathscr S(\mathbb R^n)$ defined for all $x\in\mathbb R^n$ by
\begin{equation}\label{18072019E12}
	u_{\lambda}(x) = e^{i\lambda\langle\xi_{0,S},x\rangle}e^{-\vert x\vert^2}.
\end{equation}
The strategy will be to obtain upper and lower bounds for the norm
$$\big\Vert\langle\xi_0,\nabla_x\rangle e^{-ta^w_t}u_{\lambda}\big\Vert_{L^2},$$
and to consider the asymptotics when $\lambda$ goes to $+\infty$ in order to conclude that $\xi_{0,S}\in S$ has to be equal to zero. An upper bound can be directly obtained since it follows from \eqref{18072019E11}, \eqref{18072019E12} and the Cauchy-Schwarz inequality that for all $\lambda\geq0$,
\begin{equation}\label{18072019E13}
	\big\Vert\langle\xi_0,\nabla_x\rangle e^{-ta^w_t}u_{\lambda}\big\Vert_{L^2}\le c_{t,\xi_0}\Vert u_{\lambda}\Vert_{L^2} = c_{t,\xi_0}\Vert u_0\Vert_{L^2}.
\end{equation}
Notice that the right-hand side of the above estimate does not depend on the parameter $\lambda\geq0$. On the other hand, by using that for all $\lambda\geq0$, the Fourier transform of the function $u_{\lambda}$ is given for all $\xi\in\mathbb R^n$ by
\begin{equation}\label{02092019E4}
	\widehat{u_{\lambda}}(\xi) = \int_{\mathbb R^n}e^{-i\langle x,\xi\rangle}e^{i\lambda\langle\xi_{0,S},x\rangle}e^{-\vert x\vert^2}\ \mathrm dx
	= \int_{\mathbb R^n}e^{-i\langle x,\xi-\lambda\xi_{0,S}\rangle}e^{-\vert x\vert^2}\ \mathrm dx = \widehat{u_0}(\xi-\lambda\xi_{0,S}),
\end{equation}
we deduce from Plancherel's theorem that for all $\lambda\geq0$,
\begin{equation}\label{18072019E15}
	\big\Vert\langle\xi_0,\nabla_x\rangle e^{-ta^w_t}u_{\lambda}\big\Vert_{L^2}
	= \frac1{(2\pi)^{\frac n2}}\big\Vert\langle\xi_0,\xi\rangle e^{-ta_t(\xi)}\widehat{u_0}(\xi-\lambda\xi_{0,S})\big\Vert_{L^2}.
\end{equation}
Moreover, notice that for all $\xi\in\mathbb R^n$ and $\eta\in S$, 
\begin{equation}\label{18062020E1}
	a_t(\xi+\eta) = \frac12\int_0^1\big\vert\sqrt Qe^{\alpha tB^T}(\xi+\eta)\big\vert^{2s}\ \mathrm d\alpha = \frac12\int_0^1\big\vert\sqrt Qe^{\alpha tB^T}\xi\big\vert^{2s}\ \mathrm d\alpha = a_t(\xi).
\end{equation}
Indeed, the Cayley-Hamilton theorem applied to the matrix $B^T$ shows that
$$\forall j\in\mathbb N, \forall\xi\in\mathbb R^n,\quad (B^T)^j\xi\in\Span(\xi,B^T\xi,\ldots,(B^T)^{n-1}\xi).$$
This proves that the space $S$ is also given by the following infinite intersection of kernels
\begin{equation}\label{09092019E4}
	S = \bigcap_{j=0}^{+\infty}\Ker\big(\sqrt Q(B^T)^j\big).
\end{equation}
As a consequence of this fact, we get that
\begin{equation}\label{18062020E4}
	\forall\alpha\in[0,1],\forall\eta\in S,\quad \sqrt Qe^{\alpha tB^T}\eta = \sum_{j=0}^{+\infty}\frac{(\alpha t)^j}{j!}\sqrt Q(B^T)^j\eta = 0.
\end{equation}
This implies that \eqref{18062020E1} actually holds. We therefore deduce from \eqref{02092019E4}, \eqref{18072019E15}, \eqref{18062020E1}, a change of variable in the norm and the triangle inequality that for all $\lambda\geq0$,
\begin{align}\label{18072019E16}
	\big\Vert\langle\xi_0,\nabla_x\rangle e^{-ta^w_t}u_{\lambda}\big\Vert_{L^2}
	& = \frac1{(2\pi)^{\frac n2}}\big\Vert\langle\xi_0,\xi+\lambda\xi_{0,S}\rangle e^{-ta_t(\xi+\lambda\xi_{0,S})}\widehat{u_0}(\xi)\big\Vert_{L^2}, \\[5pt]
	& = \frac1{(2\pi)^{\frac n2}}\big\Vert\langle\xi_0,\xi+\lambda\xi_{0,S}\rangle e^{-ta_t(\xi)}\widehat{u_0}\big\Vert_{L^2} \nonumber \\[5pt]
	& \geq \big\Vert\langle\xi_0,\lambda\xi_{0,S}\rangle e^{-ta^w_t}u_0\big\Vert_{L^2} - \big\Vert\langle\xi_0,\nabla_x\rangle e^{-ta^w_t}u_0\big\Vert_{L^2}Ê\nonumber \\[5pt]
	& \geq \lambda\vert\xi_{0,S}\vert^2\big\Vert e^{-ta^w_t}u_0\big\Vert_{L^2} - c_{t,\xi_0}\Vert u_0\Vert_{L^2}, \nonumber
\end{align}
since $\lambda\xi_{0,S}\in S$. It follows from \eqref{18072019E13} and \eqref{18072019E16} that for all $\lambda\geq0$,
$$\lambda\vert\xi_{0,S}\vert^2\big\Vert e^{-ta^w_t}u_0\big\Vert_{L^2}\le 2c_{t,\xi_0}\Vert u_0\Vert_{L^2}.$$
Since the function $e^{-ta^w_t}u_0$ is not equal to zero ($u_0$ is a Gaussian function and the symbol of the bounded Fourier multiplier $e^{-ta^w_t}$ is not equal to zero) and that the right-hand side of the above estimate does not depend on the parameter $\lambda\geq0$, we conclude that $\xi_{0,S} = 0$, that is, $\xi_0\in S^{\perp}$. This ends the proof of Theorem \ref{17072019T1}.

\subsection{Regularizing effects}\label{refeffects} In the second part of this section, we prove Theorem \ref{09072019T1}. We keep the notations introduced in the previous subsection, in particular the time-dependent real-valued symbol $a_t$ defined in \eqref{18072019E14}. 

Since the operator $e^{-t\langle Bx,\nabla_x\rangle}$ is a similitude on $L^2(\mathbb R^n)$ for all $t\geq0$, we notice from \eqref{18072019E18} that it is sufficient to obtain the regularizing effects of the operators $e^{-ta^w_t}$ to derive the ones of the operators $e^{-t\mathcal P}$. Let $m\geq1$ be a positive integer and $\xi_1,\ldots,\xi_m\in S^{\perp}$. We are interested in studying the operators 
$$\langle\xi_1,\nabla_x\rangle\ldots\langle\xi_m,\nabla_x\rangle e^{-ta^w_t},\quad t>0.$$ 
Since the operators $\langle\xi_j,\nabla_x\rangle$ and $e^{-ta^w_t}$ are Fourier multipliers, they commute, and the following factorization holds for all $t>0$,
\begin{align}\label{18072019E6}
	\langle\xi_1,\nabla_x\rangle\ldots\langle\xi_m,\nabla_x\rangle e^{-ta^w_t} 
	& = \langle\xi_1,\nabla_x\rangle\ldots\langle\xi_m,\nabla_x\rangle\underbrace{e^{-\frac tma^w_t}\ldots e^{-\frac tma^w_t}}_{\text{$m$ factors}} \\
	& = \langle\xi_1,\nabla_x\rangle e^{-\frac tma^w_t}\ldots\langle\xi_m,\nabla_x\rangle e^{-\frac tma^w_t}, \nonumber
\end{align}
where we used the semigroup property of the family of contraction operators $(e^{-s a^w_t})_{s\geq0}$. The initial problem is therefore reduced to the analysis of the operators 
$$\langle\xi_j,\nabla_x\rangle e^{-\frac tma^w_t},\quad j\in\{1,\ldots,m\},\ t>0.$$ 
In the following proposition, we prove a first regularizing effect for the semigroups $(e^{-\tau a^w_t})_{\tau\geq0}$ for all $t>0$.

\begin{prop}\label{18072019P2} There exist some positive constants $c>1$ and $T>0$ such that for all $k\in\{0,\ldots,r\}$, $q>0$, $0<t<T$, $\tau>0$ and $u\in L^2(\mathbb R^n)$,
$$\big\Vert\big\vert\sqrt Q(B^T)^kD_x\big\vert^qe^{-\tau a^w_t}u\big\Vert_{L^2}\le\bigg(\frac c{t^k\tau^{\frac1{2s}}}\bigg)^q\ q^{\frac q{2s}}\ \Vert u\Vert_{L^2}.$$
\end{prop}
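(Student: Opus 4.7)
The plan is to pass to the Fourier side. Since $e^{-\tau a_t^w}$ is a Fourier multiplier with non-negative symbol $e^{-\tau a_t(\xi)}$, so is the composition $|\sqrt Q(B^T)^k D_x|^q e^{-\tau a_t^w}$, whose $L^2$-operator norm equals, by Plancherel, the $L^\infty$-norm of its symbol. The technical core will be the following subelliptic lower bound on $a_t$ from \eqref{18072019E14}: there exist $c_0>0$ and $T>0$ (depending only on $B$, $Q$, $s$) such that for every $k\in\{0,\dots,r\}$, every $0<t<T$ and every $\xi\in\mathbb R^n$,
$$a_t(\xi)\;\ge\;c_0\,t^{2ks}\,\bigl|\sqrt Q(B^T)^k\xi\bigr|^{2s}. \qquad(\star)$$

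Once $(\star)$ is in hand, the pointwise bound on the symbol is immediate by one-variable maximization: setting $y=|\sqrt Q(B^T)^k\xi|^{2s}\ge 0$, one has $|\sqrt Q(B^T)^k\xi|^q e^{-\tau a_t(\xi)}\le y^{q/(2s)}e^{-c_0\tau t^{2ks} y}$, and the elementary identity $\max_{y\ge 0} y^p e^{-\lambda y}=(p/(e\lambda))^p$, applied with $p=q/(2s)$ and $\lambda=c_0\tau t^{2ks}$, delivers the claimed estimate with a suitable constant $c$.

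To prove $(\star)$, I would change variables $\beta=\alpha t$ in the defining integral of $a_t$, reducing the inequality to $\int_0^t|\sqrt Q e^{\beta B^T}\xi|^{2s}\,d\beta \ge 2c_0\, t^{2ks+1}|\sqrt Q(B^T)^k\xi|^{2s}$. Both sides are $2s$-homogeneous in $\xi$ and vanish identically on $S$ by \eqref{09092019E4}--\eqref{18062020E4}, so I may restrict to $\xi\in S^\perp$, on which the linear map $\xi\mapsto(w_0,\dots,w_r)$ with $w_j=\sqrt Q(B^T)^j\xi$ is injective. After the further substitution $\sigma=\beta/t\in[0,1]$ and the rescaling $\zeta_k=t^k w_k$ for $k=0,\dots,r$, the integrand takes the form $\bigl|\sum_{k=0}^r(\sigma^k/k!)\zeta_k+R_t(\sigma)\bigr|^{2s}$, where the remainder $R_t$, arising from expressing $w_j$ with $j>r$ as linear combinations of $w_0,\dots,w_r$ via the Cayley-Hamilton theorem, satisfies $|R_t(\sigma)|=O(t)$ uniformly on the compact set $\{\sum_k|\zeta_k|^{2s}=1\}$ (the growth $|w_k|\le|\zeta_k|/t^k$ being compensated by the factor $(t\sigma)^j/j!$ for $j\ge r+1$). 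The leading functional $\zeta\mapsto\int_0^1|\sum_{k=0}^r(\sigma^k/k!)\zeta_k|^{2s}\,d\sigma$ is continuous and strictly positive on that compact set (a nontrivial vector-valued polynomial cannot vanish a.e.), hence bounded below by some $c_0'>0$, and a uniform-continuity argument absorbs the $O(t)$ perturbation to produce $(\star)$ for $T$ small enough.

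The main obstacle is precisely this $t$-uniformity of $(\star)$ as $t\to 0^+$: the rescaling $\zeta_k=t^k w_k$ is engineered to absorb the $t$-degeneracy of the higher-order Taylor coefficients of $\beta\mapsto\sqrt Q e^{\beta B^T}\xi$, after which a standard compactness argument on the finite-dimensional quotient $\mathbb R^n/S$ closes the argument and lets the constant $c_0$ be chosen independently of $t\in(0,T)$ and $k\in\{0,\dots,r\}$.
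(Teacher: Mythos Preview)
Your argument is correct and follows a route that is close in spirit to the paper's but organized differently. Both proofs pass to the Fourier side and reduce the statement to a pointwise symbol estimate; the difference lies in how that estimate is obtained. The paper writes $\xi=\rho\sigma$ in polar coordinates, optimizes over the radial variable $\rho$ (this produces the factor $q^{q/(2s)}$), and is left with the $\sigma$--dependent ratio
\[
M_t(\sigma)=\bigg(\int_0^1\big\vert\sqrt Qe^{\alpha tB^T}\sigma\big\vert^2\,\mathrm d\alpha\bigg)^{1/2}\bigg(\int_0^1\big\vert\sqrt Qe^{\alpha tB^T}\sigma\big\vert^{2s}\,\mathrm d\alpha\bigg)^{-1/(2s)},
\]
whose uniform boundedness on $\mathbb S^{n-1}\cap S^\perp$ for small $t$ is established separately (Proposition~\ref{18072019P6}); the coercivity in the direction $\sqrt Q(B^T)^k$ then comes from the $L^2$ lower bound of Proposition~\ref{18072019P3}. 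You instead prove the $L^{2s}$ coercivity $(\star)$ in one stroke via the rescaling $\zeta_k=t^k\sqrt Q(B^T)^k\xi$, which pushes all the $t$--degeneracy into a remainder of size $O(t)$ and reduces the lower bound to compactness on a fixed sphere in $(\mathbb R^n)^{r+1}$. Your route is more direct and avoids the intermediate $M_t$ quantity; the paper's splitting, on the other hand, isolates two reusable lemmas (the $L^2$ coercivity and the $L^2$--$L^{2s}$ comparison) that are invoked again in the nilpotent case.

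Two small points of exposition: the reduction to $\xi\in S^\perp$ is legitimate not merely because both sides of $(\star)$ vanish on $S$, but because both are \emph{invariant} under translation by $S$ (this is the content of \eqref{18062020E1}); and the expression of $w_j$ for $j>r$ as a linear combination of $w_0,\dots,w_r$ comes from the injectivity of $\xi\mapsto(w_0,\dots,w_r)$ on $S^\perp$ rather than from Cayley--Hamilton directly. Neither affects the validity of your argument.
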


\begin{proof} Let us begin by noticing that to prove Proposition \ref{18072019P2}, it is sufficient to obtain the existence of some positive constants $c_1>0$ and $t_1>0$ such that for all $q>0$, $0<t<t_1$, $\tau>0$ and $\xi\in\mathbb R^n$,
\begin{equation}\label{09072019E1}
\bigg(\int_0^1\big\vert\sqrt Qe^{\alpha tB^T}\xi\big\vert^2\ \mathrm d\alpha\bigg)^{\frac q2}\exp\bigg(-\frac{\tau}2\int_0^1\big\vert\sqrt Qe^{\alpha tB^T}\xi\big\vert^{2s}\ \mathrm d\alpha\bigg)
\le\bigg(\frac{c_1}{\tau^{\frac1{2s}}}\bigg)^q\bigg(\frac q{es}\bigg)^{\frac q{2s}}.
\end{equation}
Indeed, we check in Proposition \ref{18072019P3} in Appendix, by using finite-dimensional and compactness arguments, that there exist some positive constants $c_0>0$ and $t_0>0$ such that for all $0<t<t_0$ and $\xi\in\mathbb R^n$,
\begin{equation}\label{09072019E2}
	\int_0^1\big\vert\sqrt Qe^{\alpha tB^T}\xi\big\vert^2\ \mathrm d\alpha\geq c_0\sum_{k=0}^rt^{2k}\big\vert\sqrt Q(B^T)^k\xi\big\vert^2.
\end{equation}
Once the estimates \eqref{09072019E1} and \eqref{09072019E2} are established, we deduce from the definition \eqref{18072019E14} of the symbols $a_t$ and Plancherel's theorem that for all $k\in\{0,\ldots,r\}$, $q>0$, $0<t<\min(t_0,t_1)$, $\tau>0$ and $u\in L^2(\mathbb R^n)$,
\begin{align*}
	\big\Vert\big\vert\sqrt Q(B^T)^kD_x\big\vert^qe^{-\tau a^w_t}u\big\Vert_{L^2(\mathbb R^n)}
	& \le\bigg(\frac1{\sqrt{c_0}t^k}\bigg)^q\bigg\Vert\bigg(\int_0^1\big\vert\sqrt Qe^{\alpha tB^T}D_x\big\vert^2\ \mathrm d\alpha\bigg)^{\frac q2}e^{-\tau a^w_t}u\bigg\Vert_{L^2(\mathbb R^n)} \\[5pt]
	& \le\bigg(\frac1{\sqrt{c_0}t^k}\bigg)^q\ \bigg(\frac{c_1}{{\tau}^{\frac1{2s}}}\bigg)^q\ \bigg(\frac q{es}\bigg)^{\frac q{2s}}\ \Vert u\Vert_{L^2(\mathbb R^n)},
\end{align*}
which is the desired estimate. We therefore focus on establishing the estimate \eqref{09072019E1}. In order to simplify the notations, let us consider the real-valued symbol $\Gamma_{q,t,\tau}$ defined for all $q>0$, $t>0$, $\tau>0$ and $\xi\in\mathbb R^n$ by
\begin{equation}\label{09072019E3}
	\Gamma_{q,t,\tau}(\xi) = \bigg(\int_0^1\big\vert\sqrt Qe^{\alpha tB^T}\xi\big\vert^2\ \mathrm d\alpha\bigg)^{\frac q2}\exp\bigg(-\frac{\tau}2\int_0^1\big\vert\sqrt Qe^{\alpha tB^T}\xi\big\vert^{2s}\ \mathrm d\alpha\bigg).
\end{equation}
First of all, observe from \eqref{18062020E4} and \eqref{09072019E3} that for all $q>0$, $t>0$ and $\tau>0$, the symbol $\Gamma_{q,t,\tau}$ satisfies 
$$\forall\xi\in\mathbb R^n,\quad \Gamma_{q,t,\tau}(\xi) = \Gamma_{q,t,\tau}(\xi_{S^{\perp}}),$$
where $\xi_{S^{\perp}}\in\mathbb R^n$ denotes the coordinate of the vector $\xi\in\mathbb R^n$ with respect to the decomposition $\mathbb R^n = S\oplus S^{\perp}$, the orthogonality being taken with respect to the canonical Euclidean structure of $\mathbb R^n$. Thus, we only need to establish the estimate \eqref{09072019E1} when $\xi\in S^{\perp}$. To that end, we will take advantage of the homogeneity property with respect to the $\xi$-variable of the two integrals 
$$\int_0^1\big\vert\sqrt Qe^{\alpha tB^T}\xi\big\vert^2\ \mathrm d\alpha\quad\text{and}\quad\int_0^1\big\vert\sqrt Qe^{\alpha tB^T}\xi\big\vert^{2s}\ \mathrm d\alpha.$$
Let $\xi\in S^{\perp}\setminus\{0\}$ and $(\rho,\sigma)$ be the polar coordinates of $\xi$, i.e. $\xi = \rho\sigma$ with $\rho>0$ and $\sigma\in\mathbb S^{n-1}$. It follows from \eqref{09072019E3} and the estimate
$$\forall q>0,\forall x\geq0,\quad x^qe^{-x^{2s}} \le \bigg(\frac q{2es}\bigg)^{\frac q{2s}},$$
that for all $q>0$, $t>0$ and $\tau>0$,
\begin{align}\label{10052018E8}
	\Gamma_{q,t,\tau}(\xi) & = \bigg(\int_0^1\big\vert\sqrt Qe^{\alpha tB^T}\sigma\big\vert^2\ \mathrm d\alpha\bigg)^{\frac q2}\rho^q\exp\bigg(-\bigg(\frac{\tau}2\int_0^1\big\vert\sqrt Qe^{\alpha tB^T}\sigma\big\vert^{2s}\ \mathrm d\alpha\bigg)\rho^{2s}\bigg) \\[5pt]
	& \le \bigg(\int_0^1\big\vert\sqrt Qe^{\alpha tB^T}\sigma\big\vert^2\ \mathrm d\alpha\bigg)^{\frac q2}\bigg(\frac{\tau}2\int_0^1\big\vert\sqrt Qe^{\alpha tB^T}\sigma\big\vert^{2s}\ \mathrm d\alpha\bigg)^{-\frac q{2s}}\bigg(\frac q{2es}\bigg)^{\frac q{2s}} \nonumber \\[5pt]
	& \le \bigg(\frac{M_t}{\tau^{\frac 1{2s}}}\bigg)^q\bigg(\frac q{es}\bigg)^{\frac q{2s}}, \nonumber
\end{align}
where we set
\begin{equation}\label{17072019E1}
	M_t = \sup_{\eta\in\mathbb S^{n-1}\cap S^{\perp}}\bigg(\int_0^1\big\vert\sqrt Qe^{\alpha tB^T}\eta\big\vert^2\ \mathrm d\alpha\bigg)^{\frac12}\bigg(\int_0^1\big\vert\sqrt Qe^{\alpha tB^T}\eta\big\vert^{2s}\ \mathrm d\alpha\bigg)^{-\frac1{2s}}.
\end{equation}
The study of the term $M_t$ is made by the author and J. Bernier in \cite{AB} (Proposition 3.6) when the vector space $S$ is reduced to zero, that is, when the Kalman rank condition \eqref{22072019E1} holds according to Lemma \ref{29082018E1}. The same strategy of proof as the one used in this paper (finite-dimensional and compactness arguments) turns out to work in the general case. By using these arguments, it is proved in Proposition \ref{18072019P6} in Appendix that there exist some positive constants $c_1>0$ and $t_1>0$ such that for all $0<t<t_1$,
\begin{equation}\label{01052019E2}
	M_t\le c_1.
\end{equation}
We therefore deduce from \eqref{09072019E3}, \eqref{10052018E8} and \eqref{01052019E2} that for all $q>0$, $0<t<t_1$, $\tau>0$ and $\xi\in S^{\perp}$,
$$\bigg(\int_0^1\big\vert\sqrt Qe^{\alpha tB^T}\xi\big\vert^2\ \mathrm d\alpha\bigg)^{\frac q2}\exp\bigg(-\frac{\tau}2\int_0^1\big\vert\sqrt Qe^{\alpha tB^T}\xi\big\vert^{2s}\ \mathrm d\alpha\bigg)\le\bigg(\frac{c_1}{\tau^{\frac1{2s}}}\bigg)^q\bigg(\frac q{es}\bigg)^{\frac q{2s}}.$$
This ends the proof of the estimate \eqref{09072019E1} and hence, the one of Proposition \ref{18072019P2}.
\end{proof}

The above result also provides estimates for the operators $e^{-t\mathcal P}$. Indeed, we deduce from \eqref{18072019E1}, \eqref{18062020E3}, Proposition \ref{18072019P2} and Plancherel's theorem that for all $k\in\{0,\ldots,r\}$, $q>0$, $0<t<T$ and $u\in L^2(\mathbb R^n)$,
\begin{align}\label{18062020E5}
	\big\Vert\big\vert\sqrt Q(B^T)^kD_x\big\vert^qe^{-t\mathcal P}u\big\Vert_{L^2}
	& \le \bigg(\frac c{t^{k+\frac1{2s}}}\bigg)^q\ q^{\frac q{2s}}\ \big\Vert e^{-t\langle Bx,\nabla_x\rangle} u\big\Vert_{L^2} \\[5pt]
	& = \bigg(\frac c{t^{k+\frac1{2s}}}\bigg)^q\ e^{\frac12\Tr(B)t}\ q^{\frac q{2s}}\ \Vert u\Vert_{L^2}. \nonumber
\end{align}
As explained in Subsection \ref{subelliptic}, these estimates are key to establish the partial subelliptic estimates enjoyed by fractional Ornstein-Uhlenbeck operators on  the whole space.

\begin{cor}\label{18082019C1} There exist some positive constants $c>0$ and $0<T<1$ such that for all $\xi_0\in S^{\perp}$, $0<t<T$, $\tau>0$ and $u\in L^2(\mathbb R^n)$,
$$\big\Vert\langle\xi_0,\nabla_x\rangle e^{-\tau a^w_t}u\big\Vert_{L^2}\le c\vert\xi_0\vert\ t^{-k_{\xi_0}}{\tau}^{-\frac1{2s}}\ \Vert u\Vert_{L^2},$$
where $0\le k_{\xi_0}\le r$ denotes the index of the vector $\xi_0\in S^{\perp}$ defined in \eqref{18072019E2}.
\end{cor}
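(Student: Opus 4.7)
The plan is to deduce this estimate directly from Proposition \ref{18072019P2} applied with $q=1$, by expanding the directional derivative $\langle\xi_0,\nabla_x\rangle$ along vectors of the form $B^j\sqrt Q\,\eta_j$ with $0\le j\le k_{\xi_0}$. Taking orthogonal complements of the intersection of kernels in \eqref{18072019E3} and using that $\sqrt Q$ is symmetric, so that $(\sqrt Q(B^T)^j)^T = B^j\sqrt Q$, yields the identity
$$V_k^{\perp} = \sum_{j=0}^k\Imag\big(B^j\sqrt Q\big),\quad 0\le k\le r.$$
By definition of the index, $\xi_0\in V_{k_{\xi_0}}^{\perp}$, so we may write $\xi_0 = \sum_{j=0}^{k_{\xi_0}}B^j\sqrt Q\,\eta_j$ for some vectors $\eta_j\in\mathbb R^n$.

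The first step is to obtain such a decomposition in a controlled way. For each $k\in\{0,\ldots,r\}$, I would fix once and for all a bounded linear right-inverse of the canonical surjection $(\mathbb R^n)^{k+1}\to V_k^{\perp}$ sending $(\eta_0,\ldots,\eta_k)$ to $\sum_{j=0}^kB^j\sqrt Q\,\eta_j$. Such right-inverses exist by finite-dimensional linear algebra, and taking the maximum of their operator norms over $k\in\{0,\ldots,r\}$ provides a universal constant $C_0>0$, depending only on $B$ and $Q$, such that the decomposition above can be chosen to satisfy $\sum_{j=0}^{k_{\xi_0}}|\eta_j|\le C_0|\xi_0|$.

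The second step uses the symmetry of $\sqrt Q$ to rewrite
$$\langle\xi_0,\nabla_x\rangle = \sum_{j=0}^{k_{\xi_0}}\langle B^j\sqrt Q\,\eta_j,\nabla_x\rangle = \sum_{j=0}^{k_{\xi_0}}\langle\eta_j,\sqrt Q(B^T)^j\nabla_x\rangle.$$
The pointwise Cauchy-Schwarz inequality in the Fourier variable combined with Plancherel's theorem produces the term-by-term bound
$$\big\Vert\langle\eta_j,\sqrt Q(B^T)^jD_x\rangle e^{-\tau a_t^w}u\big\Vert_{L^2}\le|\eta_j|\ \big\Vert\big\vert\sqrt Q(B^T)^jD_x\big\vert e^{-\tau a_t^w}u\big\Vert_{L^2}.$$
Applying Proposition \ref{18072019P2} with $q=1$ and $k=j\in\{0,\ldots,k_{\xi_0}\}\subset\{0,\ldots,r\}$, then summing via the triangle inequality and using $t^{-j}\le t^{-k_{\xi_0}}$ (valid as $0<t<T<1$ and $j\le k_{\xi_0}$) together with $\sum_j|\eta_j|\le C_0|\xi_0|$, yields the claimed estimate after possibly shrinking $T$ so that $T<1$. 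The only non-routine point is the uniform right-inverse bound of the first step, which however is pure linear algebra on the finite family of subspaces $V_0^{\perp}\subsetneq\cdots\subsetneq V_r^{\perp}$.
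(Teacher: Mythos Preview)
Your argument is correct and follows the same overall strategy as the paper: reduce to Proposition \ref{18072019P2} with $q=1$ for each $j\le k_{\xi_0}$, then sum using $t^{-j}\le t^{-k_{\xi_0}}$ for $0<t<1$. The only difference lies in the linear-algebra step producing the reduction. The paper works on the Fourier side with the orthogonal projection $\mathbb P_{k_{\xi_0}}$ onto $V_{k_{\xi_0}}^{\perp}$: since the quadratic form $\xi\mapsto\sum_{j=0}^{k_{\xi_0}}\big\vert\sqrt Q(B^T)^j\xi\big\vert^2$ vanishes exactly on $V_{k_{\xi_0}}$, it is positive definite on $V_{k_{\xi_0}}^{\perp}$, yielding a uniform pointwise bound $\vert\langle\xi_0,\xi\rangle\vert\le M_0\vert\xi_0\vert\big(\sum_{j=0}^{k_{\xi_0}}\vert\sqrt Q(B^T)^j\xi\vert^2\big)^{1/2}$. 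You instead take the dual viewpoint, decomposing $\xi_0$ along the ranges $\Imag(B^j\sqrt Q)$ via a bounded right-inverse. Both arguments are pure finite-dimensional linear algebra on the fixed family $V_0^{\perp}\subsetneq\cdots\subsetneq V_r^{\perp}$ and yield the same uniform constant; your decomposition is slightly more explicit, while the paper's projection argument is marginally shorter since it avoids choosing right-inverses.
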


\begin{proof} First, let us check that there exists a positive constant $M_0>0$ such that for all $\xi_0\in S^{\perp}$ and $\xi\in\mathbb R^n$,
\begin{equation}\label{02092019E3}
	\big\vert\langle\xi_0,\xi\rangle\big\vert^2 \le M^2_0\vert\xi_0\vert^2\sum_{j=0}^{k_{\xi_0}}\big\vert\sqrt Q(B^T)^j\xi\big\vert^2.
\end{equation}
For all non-negative integer $k\in\{0,\ldots,r\}$, we consider $\mathbb P_k$ the orthogonal projection onto the vector subspace $V^{\perp}_k\subset\mathbb R^n$, the orthogonality being taken with respect to the canonical Euclidean structure of $\mathbb R^n$, where the vector subspace $V_k$ is defined in \eqref{18072019E3} by
$$V_k = \bigcap_{j=0}^k\Ker\big(\sqrt Q(B^T)^j\big).$$ 
Notice from the definition of the vector spaces $V_k$ that for all $k\in\{0,\ldots,r\}$, there exists a positive constant $c_k>0$ such that for all $\xi\in\mathbb R^n$,
$$\sum_{j=0}^k\big\vert\sqrt Q(B^T)^j\xi\big\vert^2 = \sum_{j=0}^k\big\vert\sqrt Q(B^T)^j\mathbb P_k\xi\big\vert^2\geq c_k\vert\mathbb P_k\xi\vert^2.$$
Let $\xi_0\in S^{\perp}$. The definition \eqref{18072019E2} of index implies that $\xi_0\in V^{\perp}_{k_{\xi_0}}$. It follows from the above estimate and the Cauchy-Schwarz inequality that for all $\xi\in\mathbb R^n$,
$$\big\vert\langle\xi_0,\xi\rangle\big\vert^2 = \big\vert\langle\xi_0,\mathbb P_{k_{\xi_0}}\xi\rangle\big\vert^2\le\vert\xi_0\vert^2\vert\mathbb P_{k_{\xi_0}}\xi\vert^2\le\frac{\vert\xi_0\vert^2}{c_{k_{\xi_0}}}\sum_{k=0}^{k_{\xi_0}}\big\vert\sqrt Q(B^T)^k\xi\big\vert^2.$$
This proves that the estimate \eqref{02092019E3} holds, with $M^2_0 = 1/\min_{k\in\{0,\ldots,r\}}c_k>0$. We therefore deduce from \eqref{02092019E3} and Plancherel's theorem that for all $\xi_0\in S^{\perp}$, $t\geq0$, $\tau\geq0$ and $u\in L^2(\mathbb R^n)$,
\begin{equation}\label{18072019E7}
	\big\Vert\langle\xi_0,\nabla_x\rangle e^{-\tau a^w_t}u\big\Vert_{L^2}\le M_0\vert\xi_0\vert\ \sum_{k=0}^{k_{\xi_0}}\big\Vert\big\vert\sqrt Q(B^T)^kD_x\big\vert e^{-\tau a^w_t}u\big\Vert_{L^2}.
\end{equation}
On the other hand, Proposition \ref{18072019P2} provides the existence of some positive constants $M_1>1$ and $t_1>0$ such that for all $k\in\{0,\ldots,r\}$, $0<t<t_1$, $\tau>0$ and $u\in L^2(\mathbb R^n)$,
\begin{equation}\label{18072019E8}
	\big\Vert\big\vert\sqrt Q(B^T)^kD_x\big\vert e^{-\tau a^w_t}u\big\Vert_{L^2}
	\le\frac{c'}{t^k\tau^{\frac1{2s}}}\ \Vert u\Vert_{L^2}.
\end{equation}
It follows from \eqref{18072019E7} and \eqref{18072019E8} that for all $\xi_0\in S^{\perp}$, $0<t<\min(t_1,1)$, $\tau>0$ and $u\in L^2(\mathbb R^n)$,
\begin{align*}
	\big\Vert\langle\xi_0,\nabla_x\rangle e^{-\tau a^w_t}u\big\Vert_{L^2}
	& \le M_0\vert\xi_0\vert\ \bigg(\sum_{k=0}^{k_{\xi_0}}\frac{M_1}{t^k\tau^{\frac1{2s}}}\bigg)\ \Vert u\Vert_{L^2} \\
	& \le M_0M_1(r+1)\vert\xi_0\vert\ t^{-k_{\xi_0}}\tau^{-\frac1{2s}}\ \Vert u\Vert_{L^2},
\end{align*}
since $0\le k_{\xi_0}\le r$. This ends the proof of Corollary \ref{18082019C1}.
\end{proof}

We can now tackle the proof of Theorem \ref{09072019T1}. To that end, we implement the strategy presented in the beginning of this subsection. Let $m\geq1$ and $\xi_1,\ldots,\xi_m\in S^{\perp}$. We denote by $0\le k_{\xi_j}\le r$ the index of the vector $\xi_j\in S^{\perp}$ defined in \eqref{18072019E2}. It follows from \eqref{18072019E6} that for all $t\geq0$, 
\begin{equation}\label{18072019E9}
	\langle\xi_1,\nabla_x\rangle\ldots\langle\xi_m,\nabla_x\rangle e^{-ta^w_t} 
	= \langle\xi_1,\nabla_x\rangle e^{-\frac tma^w_t}\ldots\langle\xi_m,\nabla_x\rangle e^{-\frac tma^w_t}.
\end{equation}
According to Corollary \ref{18082019C1}, there exist some positive constants $c>0$ and $0<T<1$ such that for all $\xi_0\in S^{\perp}$, $m\geq1$ and $0<t<T$,
\begin{equation}\label{18072019E10}
	\big\Vert\langle\xi_0,\nabla_x\rangle e^{-\frac tma^w_t}u\big\Vert_{L^2}\le\frac c{t^{k_{\xi_0}+\frac1{2s}}}\ \vert\xi_0\vert\ m^{\frac1{2s}}\ \Vert u\Vert_{L^2},
\end{equation}
where $0\le k_{\xi_0}\le r$ denotes the index of the vector $\xi_0\in S^{\perp}$ defined in \eqref{18072019E2}. We deduce from \eqref{18072019E9} and \eqref{18072019E10} that for all $0<t<T$ and $u\in L^2(\mathbb R^n)$,
\begin{align*}
	\big\Vert\langle\xi_1,\nabla_x\rangle\ldots\langle\xi_m,\nabla_x\rangle e^{-ta^w_t}u\big\Vert_{L^2}
	& \le \frac{c^m}{t^{k_{\xi_1}+\ldots+k_{\xi_m}+\frac m{2s}}}\ \bigg(\prod_{j=1}^m\vert\xi_j\vert\bigg)\ m^{\frac m{2s}}\ \Vert u\Vert_{L^2} \\[5pt]
	& \le \frac{e^{\frac m{2s}}c^m}{t^{k_{\xi_1}+\ldots+k_{\xi_m}+\frac m{2s}}}\ \bigg(\prod_{j=1}^m\vert\xi_j\vert\bigg)\ (m!)^{\frac1{2s}}\ \Vert u\Vert_{L^2},
\end{align*}
where we used the factorial estimate $m^m\le e^mm!$. It follows from \eqref{18072019E1}, \eqref{18062020E3} and Plancherel's theorem that for all $0<t<T$ and $u\in L^2(\mathbb R^n)$,
\begin{align*}
	\big\Vert\langle\xi_1,\nabla_x\rangle\ldots\langle\xi_m,\nabla_x\rangle e^{-t\mathcal P}u\big\Vert_{L^2}
	& \le \frac{e^{\frac m{2s}}c^m}{t^{k_{\xi_1}+\ldots+k_{\xi_m}+\frac m{2s}}}\ \bigg(\prod_{j=1}^m\vert\xi_j\vert\bigg)\ (m!)^{\frac 1{2s}}\ \big\Vert e^{-t\langle Bx,\nabla_x\rangle}u\big\Vert_{L^2} \\[5pt]
	& = \frac{e^{\frac m{2s}}c^me^{\frac12\Tr(B)t}}{t^{k_{\xi_1}+\ldots+k_{\xi_m}+\frac m{2s}}}\ \bigg(\prod_{j=1}^m\vert\xi_j\vert\bigg)\ (m!)^{\frac1{2s}}\ \Vert u\Vert_{L^2}.
\end{align*}
This ends the proof of Theorem \ref{09072019T1}.

\subsection{The nilpotent case} To end this section, we explain how the proof of Theorem \ref{09072019T1} can be adapted to obtain long-time asymptotics when the matrix $B$ is nilpotent (of order $K\geq1$ say), and to prove in particular the estimates stated in Proposition \ref{22062020P1}. In this case, we deduce from Propositions \ref{18072019P3} and \ref{18072019P6} in Appendix that there exists a positive constant $c_0>0$ such that
$$\forall t>0,\forall\xi\in\mathbb R^n,\quad\int_0^1\big\vert\sqrt Qe^{\alpha tB^T}\xi\big\vert^2\ \mathrm d\alpha\geq c_0\sum_{k=0}^{K-1}t^{2k}\big\vert\sqrt Q(B^T)^k\xi\big\vert^2,$$
and
$$\forall t>0,\quad\sup_{\xi\in\mathbb S^{n-1}\cap S^{\perp}}\bigg(\int_0^1\big\vert\sqrt Qe^{\alpha tB^T}\xi\big\vert^2\ \mathrm d\alpha\bigg)^{\frac12}\bigg(\int_0^1\big\vert\sqrt Qe^{\alpha tB^T}\xi\big\vert^{2s}\ \mathrm d\alpha\bigg)^{-\frac1{2s}}\le c_0.$$
From the same proof than the one of Proposition \ref{18072019P2}, we deduce that there exists another positive constant $c_1>1$ such that for all $k\in\{0,\ldots,K-1\}$, $q>0$, $t>0$, $\tau>0$ and $u\in L^2(\mathbb R^n)$,
\begin{equation}\label{22062020E6}
	\big\Vert\big\vert\sqrt Q(B^T)^kD_x\big\vert^qe^{-\tau a^w_t}u\big\Vert_{L^2}\le\bigg(\frac{c_1}{t^k\tau^{\frac1{2s}}}\bigg)^q\ q^{\frac q{2s}}\ \Vert u\Vert_{L^2}.
\end{equation}
We now assume that the fractional Ornstein-Uhlenbeck operator $\mathcal P$ is the fractional Kolmogorov operator $\mathcal K$ given by
$$\mathcal K = (-\Delta_v)^s + v\cdot\nabla_x,\quad(x,v)\in\mathbb R^{2n},$$
that is, that the matrices $B$ and $Q$ are respectively given by
$$B = \begin{pmatrix}
	0_n & I_n \\
	0_n & 0_n
\end{pmatrix}\quad \text{and}\quad Q = 2^{\frac1s}\begin{pmatrix}
	0_n & 0_n \\
	0_n & I_n
\end{pmatrix}.$$
In this particular case, the estimates \eqref{22062020E6} write in the following way for all $q>0$, $t>0$, $\tau>0$ and $u\in L^2(\mathbb R^n)$,
\begin{equation}\label{08072020E1}
	\big\Vert\big\vert D_x\big\vert^qe^{-\tau a^w_t}u\big\Vert_{L^2}\le\bigg(\frac{c_1}{t\tau^{\frac1{2s}}}\bigg)^q\ q^{\frac q{2s}}\ \Vert u\Vert_{L^2},\quad
	\big\Vert\big\vert D_v\big\vert^qe^{-\tau a^w_t}u\big\Vert_{L^2}\le\bigg(\frac{c_1}{\tau^{\frac1{2s}}}\bigg)^q\ q^{\frac q{2s}}\ \Vert u\Vert_{L^2}.
\end{equation}
Since the operators $\vert D_x\vert$, $\vert D_v\vert$ and $e^{-\frac t2 a^w_t}$ are Fourier multipliers, they commute, and by writing
$$\big\vert D_x\big\vert^{q_1}\big\vert D_v\big\vert^{q_2}e^{-ta^w_t} = \big\vert D_x\big\vert^{q_1}e^{-\frac t2a^w_t}\big\vert D_v\big\vert^{q_2}e^{-\frac t2a^w_t},$$
we therefore deduce from \eqref{18072019E1}, \eqref{18062020E3} and \eqref{08072020E1} that for all $q_1,q_2>0$, $t>0$ and $u\in L^2(\mathbb R^n)$,
$$\big\Vert\big\vert D_x\big\vert^{q_1}\big\vert D_v\big\vert^{q_2}e^{-t\mathcal K}u\big\Vert_{L^2}\le\bigg(\frac{c_1}{t^{1+\frac1{2s}}}\bigg)^{q_1}\ (2q_1)^{\frac{q_1}{2s}}\ \bigg(\frac{c_1}{t^{\frac1{2s}}}\bigg)^{q_2}\ (2q_2)^{\frac{q_2}{2s}}\ \Vert u\Vert_{L^2}.$$
These estimates imply in particular that there exists a positive constant $c_2>1$ such that $(\alpha,\beta)\in\mathbb N^{2n}$, $t>0$ and $u\in L^2(\mathbb R^{2n})$,
$$\big\Vert\partial^{\alpha}_x\partial^{\beta}_v(e^{-t\mathcal K}u)\big\Vert_{L^2}\le\frac{c_2^{\vert\alpha\vert+\vert\beta\vert}}{t^{(1+\frac1{2s})\vert\alpha\vert
+ \frac{\vert\beta\vert}{2s}}}\ (\alpha!)^{\frac1{2s}}\ (\beta!)^{\frac1{2s}}\ \Vert u\Vert_{L^2},$$
where we used the factorial estimate $m^m\le e^mm!$ holding for all $m\geq1$. This ends the proof of Proposition \ref{22062020P1}.

\section{Appendix}\label{Appendix}

Let $B$ and $Q$ be $n\times n$ real matrices, with $Q$ symmetric positive semidefinite. We consider the associated vector space $S$ defined in \eqref{032019E2} and $0\le r\le n-1$ the smallest integer such that \eqref{22072019E4} holds. In this appendix, we give the proofs of some results involving the matrices $B$ and $Q$.

\subsection{About the Kalman rank condition} First, we prove the characterization of the Kalman rank condition in term of the space $S$.

\begin{lem}\label{29082018E1} The Kalman rank condition \eqref{22072019E1} holds if and only if the vector space $S$ is reduced to $\{0\}$.
\end{lem}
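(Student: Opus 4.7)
The plan is to reformulate the Kalman rank condition using the elementary identity $\Rank M = \Rank M^T$ together with the rank--nullity theorem, and then to recognize the kernel of the transpose as exactly the vector space $S$.

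First I would write down explicitly the matrix in the Kalman rank condition,
$$[B \mid \sqrt Q] = \bigl[\sqrt Q, B\sqrt Q, \ldots, B^{n-1}\sqrt Q\bigr] \in \mathcal{M}_{n,n^2}(\mathbb{R}).$$
Since this is a wide matrix with $n$ rows, the Kalman rank condition \eqref{22072019E1} is equivalent to saying that its image (a subspace of $\mathbb{R}^n$) is all of $\mathbb{R}^n$, which by orthogonality is equivalent to $\Ker\bigl([B \mid \sqrt Q]^T\bigr) = \{0\}$.

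Next, using the symmetry of $\sqrt Q$, the transpose is the tall matrix
$$\bigl[B \mid \sqrt Q\bigr]^T = \begin{pmatrix} \sqrt Q \\ \sqrt Q B^T \\ \vdots \\ \sqrt Q (B^T)^{n-1} \end{pmatrix},$$
so that a vector $\xi \in \mathbb{R}^n$ lies in $\Ker([B \mid \sqrt Q]^T)$ if and only if $\sqrt Q (B^T)^j \xi = 0$ for every $j \in \{0, \ldots, n-1\}$. By the very definition \eqref{032019E2}, this is exactly the condition $\xi \in S$.

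Combining these two equivalences yields: $\Rank[B \mid \sqrt Q] = n$ if and only if $\Ker([B \mid \sqrt Q]^T) = \{0\}$ if and only if $S = \{0\}$. No step is a serious obstacle here; the only point requiring a bit of care is to use that $\sqrt Q$ is symmetric (since $Q$ is symmetric positive semidefinite, so is its unique symmetric positive semidefinite square root) in order to identify $(\sqrt Q)^T$ with $\sqrt Q$ when writing the transpose.
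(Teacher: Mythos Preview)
Your proof is correct and follows essentially the same approach as the paper's own proof: both rewrite the rank condition as surjectivity of $[B\mid\sqrt Q]$, pass to the kernel of the transpose via orthogonality, and identify this kernel with $S$. Your version spells out the transpose and the role of the symmetry of $\sqrt Q$ a bit more explicitly, but the argument is the same.
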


\begin{proof} Using the notation of \eqref{22072019E1}, we have the following equivalences:
\begin{align*}
	\Rank\big[B\ \vert\ \sqrt Q\big] = n
	& \Leftrightarrow \Ran\big[B\ \vert\ \sqrt Q\big] = \mathbb R^n \\[5pt]
	& \Leftrightarrow \Ker\Big(\big[B\ \vert\ \sqrt Q\big]^T\Big) = \Big(\Ran\big[B\ \vert\ \sqrt Q\big]\Big)^{\perp} = \{0\} \\[5pt]
	& \Leftrightarrow S = \bigcap_{k=0}^{n-1}\Ker\big(\sqrt Q(B^T)^k\big) = \Ker\Big(\big[B\ \vert\ \sqrt Q\big]^T\Big) = \{0\},
\end{align*}
where $\perp$ denotes the orthogonality with respect to the canonical Euclidean structure. This ends the proof of Lemma \ref{29082018E1}.
\end{proof}

\subsection{A technical estimate} In this subsection, we give the proof of the estimate \eqref{09072019E2} which has a key role in the proof of Proposition \ref{18072019P2} in Section \ref{reg}. To that end, we will exploit the equivalence of norms in finite-dimensional normed vector spaces applied with Hardy's norms, as well as compactness arguments, inspired by techniques used in \cite{AB2} (Section 4).

\begin{prop}\label{18072019P3} In the general case, there exist some positive constants $c>0$ and $T>0$ such that for all $0\le t<T$ and $\xi\in\mathbb R^n$,
\begin{equation}\label{18072019E21}
	\int_0^1\big\vert\sqrt Qe^{\alpha tB^T}\xi\big\vert^2\ \mathrm d\alpha\geq c\sum_{k=0}^rt^{2k}\big\vert\sqrt Q(B^T)^k\xi\big\vert^2.
\end{equation}
In the particular case where the matrix $B$ is nilpotent of order $K\geq1$, there exists another positive constant $c_0>0$ such that for all $t>0$ and $\xi\in\mathbb R^n$,
\begin{equation}\label{08072020E2}
	\int_0^1\big\vert\sqrt Qe^{\alpha tB^T}\xi\big\vert^2\ \mathrm d\alpha\geq c_0\sum_{k=0}^{K-1}t^{2k}\big\vert\sqrt Q(B^T)^k\xi\big\vert^2.
\end{equation}
\end{prop}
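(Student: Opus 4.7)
The plan is to reduce \eqref{18072019E21} to a finite-dimensional statement about polynomials and close it with a compactness argument. First I would observe that both sides depend only on the orthogonal projection of $\xi$ onto $S^\perp$: indeed \eqref{18062020E4} gives $\sqrt Q e^{\alpha t B^T}\eta = 0$ for every $\eta\in S$, while the infinite-kernel characterization \eqref{09092019E4} yields $\sqrt Q(B^T)^k\eta = 0$ for every $k\geq0$ and every $\eta\in S$. It therefore suffices to establish both estimates for $\xi\in S^\perp$.

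Next I would Taylor expand the matrix exponential to order $r$,
$$\sqrt Q e^{\alpha t B^T}\xi = \sum_{j=0}^r \frac{(\alpha t)^j}{j!}\sqrt Q(B^T)^j\xi + R(\alpha,t,\xi),$$
where $|R(\alpha,t,\xi)|\le C\, t^{r+1}\,|\xi|$ uniformly in $\alpha\in[0,1]$ and $t$ in a bounded interval. The key finite-dimensional step is an equivalence of norms on the space of $\mathbb R^n$-valued polynomials in $\alpha$ of degree at most $r$: since the Hilbert (Hardy-type) Gram matrix $\big(1/(i+j+1)\big)_{0\le i,j\le r}$ is positive definite, one obtains a constant $c_1>0$ such that
$$\int_0^1\bigg|\sum_{j=0}^r\frac{(\alpha t)^j}{j!}\sqrt Q(B^T)^j\xi\bigg|^2\,\mathrm d\alpha\geq c_1\sum_{j=0}^r t^{2j}\,|\sqrt Q(B^T)^j\xi|^2.$$
Combining this with the remainder estimate via $(a+b)^2\geq\frac12 a^2 - b^2$ gives
$$\int_0^1|\sqrt Q e^{\alpha t B^T}\xi|^2\,\mathrm d\alpha \geq \frac{c_1}{2}\sum_{j=0}^r t^{2j}\,|\sqrt Q(B^T)^j\xi|^2 - C^2 t^{2r+2}\,|\xi|^2.$$

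It remains to absorb the remainder. A compactness argument on $\mathbb S^{n-1}\cap S^\perp$, together with the fact that $\xi\mapsto\sum_{j=0}^r|\sqrt Q(B^T)^j\xi|^2$ is strictly positive on $S^\perp\setminus\{0\}$ by the very definition of $r$, produces a constant $c_0>0$ such that $\sum_{j=0}^r|\sqrt Q(B^T)^j\xi|^2\geq c_0|\xi|^2$ for every $\xi\in S^\perp$. For $0\le t\le 1$ this forces $\sum_{j=0}^r t^{2j}|\sqrt Q(B^T)^j\xi|^2\geq c_0 t^{2r}|\xi|^2$, which dominates $C^2 t^{2r+2}|\xi|^2$ as soon as $t$ is smaller than some threshold $T>0$. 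Shrinking the constant on the right-hand side then yields \eqref{18072019E21} for $0\le t<T$.

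For the nilpotent case the Taylor expansion terminates exactly, $e^{\alpha t B^T} = \sum_{j=0}^{K-1}\frac{(\alpha t)^j}{j!}(B^T)^j$, with no remainder term. The same polynomial norm equivalence, now applied on the space of polynomials of degree at most $K-1$, delivers \eqref{08072020E2} directly, for all $t>0$ and with a constant independent of $t$. The main obstacle is thus the careful bookkeeping of the Taylor remainder in the general case; everything else reduces to a clean norm equivalence on a finite-dimensional polynomial space combined with compactness on the unit sphere of $S^\perp$.
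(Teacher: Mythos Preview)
Your proposal is correct and follows essentially the same approach as the paper: reduce to $S^\perp$, Taylor expand to order $r$, invoke a norm equivalence on the finite-dimensional space $(\mathbb R_r[\alpha])^n$ to control the polynomial part, then absorb the $O(t^{r+1}|\xi|)$ remainder via the compactness estimate $\sum_{j=0}^r|\sqrt Q(B^T)^j\xi|^2\geq c_0|\xi|^2$ on $S^\perp$; the nilpotent case falls out because the expansion is exact. The only cosmetic differences are that the paper works with Minkowski's inequality and the Hardy norm $\Vert\cdot\Vert_{\mathcal H^1}$ rather than the squared inequality $(a+b)^2\geq\tfrac12 a^2-b^2$ and the Hilbert matrix, and it phrases the reduction to $S^\perp$ via a small lemma on quadratic forms instead of your direct projection argument.
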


\begin{proof} In order the simplify the notations, we set for all $t\geq0$ and $\xi\in\mathbb R^n$,
\begin{equation}\label{01042019E8}
	q_t(\xi) = \int_0^1\big\vert\sqrt Qe^{\alpha tB^T}\xi\big\vert^2\ \mathrm d\alpha = \big\Vert\sqrt Qe^{\alpha tB^T}\xi\big\Vert^2_{L^2(0,1)}.
\end{equation}
Notice that in the above $L^2(0,1)$ norm, the parameter $\alpha\in(0,1)$ stands for the variable of integration. This notation will be used throughout the proof each time a $L^2(0,1)$ norm appears. Moreover, the notation $\alpha$ will be used once as a polynomial variable for a Hardy's norm $\mathcal H^1$ on $(\mathbb R_r[\alpha])^n$, see \eqref{06092021E1} just after.

Minkowski's inequality implies that for all $t\geq0$ and $\xi\in\mathbb R^n$,
\begin{equation}\label{01042019E1}
	\sqrt{q_t(\xi)}\geq\bigg\Vert\sum_{k=0}^r\frac{(\alpha t)^k}{k!}\sqrt Q(B^T)^k\xi\bigg\Vert_{L^2(0,1)} - \bigg\Vert\sum_{k\geq r+1}\frac{(\alpha t)^k}{k!}\sqrt Q(B^T)^k\xi\bigg\Vert_{L^2(0,1)}.
\end{equation}
We focus on controlling the two terms appearing in the right-hand side of the above estimate. First, notice that on the finite-dimensional vector space $(\mathbb R_r[\alpha])^n$, the Hardy's norm $\Vert\cdot\Vert_{\mathcal H^1}$ defined by
\begin{equation}\label{06092021E1}
	\bigg\Vert\sum_{k=0}^ry_k\alpha^k\bigg\Vert_{\mathcal H^1} = \sum_{k=0}^rk!\vert y_k\vert,\quad y_1,\ldots,y_r\in\mathbb R^n,
\end{equation}
is equivalent to the standard Lebesgue's norm $\Vert\cdot\Vert_{L^2(0,1)}$ given by
$$\bigg\Vert\sum_{k=0}^ry_k\alpha^k\bigg\Vert^2_{L^2(0,1)} = \int_0^1\bigg\vert\sum_{k=0}^ry_k\alpha^k\bigg\vert^2\ \mathrm d\alpha,\quad y_1,\ldots,y_r\in\mathbb R^n.$$
This implies that there exists a positive constant $c_1>0$ such that for all $t\geq0$ and $\xi\in\mathbb R^n$,
\begin{equation}\label{01042019E2}
	\bigg\Vert\sum_{k=0}^r\frac{(\alpha t)^k}{k!}\sqrt Q(B^T)^k\xi\bigg\Vert_{L^2(0,1)}\geq c_1\sum_{k=0}^rt^k\big\vert\sqrt Q(B^T)^k\xi\big\vert,
\end{equation}
since 
$$\forall t\geq0, \forall\xi\in\mathbb R^n,\quad \sum_{k=0}^r\frac{(\alpha t)^k}{k!}\sqrt Q(B^T)^k\xi\in(\mathbb R_r[\alpha])^n.$$
In view of \eqref{18072019E21} and \eqref{01042019E1}, it remains to check that the remainder term 
$$\bigg\Vert\sum_{k\geq r+1}\frac{(\alpha t)^k}{k!}\sqrt Q(B^T)^k\xi\bigg\Vert_{L^2(0,1)},$$
can be controlled by $\sum_{k=0}^rt^k\big\vert\sqrt Q(B^T)^k\xi\big\vert$. Precisely, we will prove that there exist some positive constants $c_2>0$ and $t_1>0$ such that for all $0\le t<t_1$ and $\xi\in S^{\perp}$,
\begin{equation}\label{18072019E22}
	\bigg\Vert\sum_{k\geq r+1}\frac{(\alpha t)^k}{k!}\sqrt Q(B^T)^k\xi\bigg\Vert_{L^2(0,1)}\le c_2t\sum_{k=0}^rt^k\big\vert\sqrt Q(B^T)^k\xi\big\vert.
\end{equation}
Once the estimate \eqref{18072019E22} is established, we deduce from \eqref{01042019E1} and \eqref{01042019E2} that for all $0\le t<t_1$ and $\xi\in S^{\perp}$,
$$\sqrt{q_t(\xi)}\geq(c_1-c_2t)\sum_{k=0}^rt^k\big\vert\sqrt Q(B^T)^k\xi\big\vert.$$
This estimate combined with the triangular inequality implies that there exist other positive constants $c>0$ and $T>0$ such that for all $0\le t<T$ and $\xi\in S^{\perp}$,
\begin{equation}\label{18072019E24}
	q_t(\xi)\geq c\sum_{k=0}^rt^{2k}\big\vert\sqrt Q(B^T)^k\xi\big\vert^2.
\end{equation}
Moreover, for all $0\le t<T$, both quadratic forms $q_t$ and $\sum_{k=0}^rt^{2k}\vert\sqrt Q(B^T)^k\cdot\vert^2$ vanish on the vector space $S$ from \eqref{09092019E4}, which proves that the estimate \eqref{18072019E24} can be extended to all $0\le t<T$ and $\xi\in\mathbb R^n$ since $\mathbb R^n = S\oplus S^{\perp}$, according to the following elementary lemma whose proof is straightforward and omitted here

\begin{lem} Let $E$ be a real finite-dimensional vector space and $q_1,q_2$ be two non-negative quadratic forms on $E$. If $E=F\oplus G$ is a direct sum of two vector subspaces such that $q_1\le q_2$ on $F$ and $q_1,q_2$ both vanish on $G$, then $q_1\le q_2$ on $E$.
\end{lem}

\noindent We therefore need to check that the estimate \eqref{18072019E22} actually holds to end the proof of \eqref{18072019E21}. First, notice that there exists a positive constant $c_3>0$ such that for all $0\le t\le 1$ and $\xi\in\mathbb R^n$,
\begin{equation}\label{01042019E4}
	\bigg\Vert\sum_{k\geq r+1}\frac{(\alpha t)^k}{k!}\sqrt Q(B^T)^k\xi\bigg\Vert_{L^2(0,1)} = t^{r+1}\bigg\Vert\sum_{k\geq r+1}t^{k-r-1}\frac{\alpha^k}{k!}\sqrt Q(B^T)^k\xi\bigg\Vert_{L^2(0,1)}\le c_3t^{r+1}\vert\xi\vert.
\end{equation}
On the other hand, it follows from the definition \eqref{22072019E4} of the integer $0\le r\le n-1$ that there exists a positive constant $c_4>0$ such that for all $0\le t\le 1$ and $\xi\in S^{\perp}$,
\begin{equation}\label{01042019E5}
	\sum_{k=0}^rt^k\big\vert\sqrt Q(B^T)^k\xi\big\vert\geq t^r\sum_{k=0}^r\big\vert\sqrt Q(B^T)^k\xi\big\vert\geq c_4t^r\vert\xi\vert,
\end{equation}
the orthogonality being taken with respect to the canonical Euclidean structure of $\mathbb R^n$. Combining \eqref{01042019E4} and \eqref{01042019E5}, we obtain that for all $0\le t\le 1$ and $\xi\in S^{\perp}$,
$$\bigg\Vert\sum_{k\geq r+1}\frac{(\alpha t)^k}{k!}\sqrt Q(B^T)^k\xi\bigg\Vert_{L^2(0,1)}\le \frac{c_3t}{c_4}\sum_{k=0}^rt^k\big\vert\sqrt Q(B^T)^k\xi\big\vert.$$
This ends the proof of the estimate \eqref{18072019E22}.

When the matrix $B$ is nilpotent of order $K\geq1$, the matrix $B^T$ is also nilpotent with the same order and we deduce that 
$$\forall t\geq0,\forall\xi\in\mathbb R^n,\quad \sqrt Qe^{\alpha tB^T}\xi\in(\mathbb R_{K-1}[\alpha])^n.$$
The finite-dimensional argument of the beginning of this proof and the triangle inequality then allow to obtain the existence of a positive constant $c_0>0$ such that for all $t\geq0$ and $\xi\in\mathbb R^n$,
$$q_t(\xi)\geq c_0\sum_{k=0}^{K-1}t^{2k}\big\vert\sqrt Q(B^T)^k\xi\big\vert^2.$$
This ends the proof of the estimate \eqref{08072020E2}.
\end{proof}

\subsection{Study of the term $M_t$} To end this appendix, we establish the estimate \eqref{01052019E2}. The proof of the following proposition follows quite line to line the ones of \cite{AB} (Subsection 3.2) and is presented for the sake of completeness of the present work.

\begin{prop}\label{18072019P6} There exist some positive constants $c>0$ and $T>0$ such that for all $0<t<T$,
\begin{equation}\label{18072019E20}
	\sup_{\xi\in\mathbb S^{n-1}\cap S^{\perp}}\bigg(\int_0^1\big\vert\sqrt Qe^{\alpha tB^T}\xi\big\vert^2\ \mathrm d\alpha\bigg)^{\frac12}\bigg(\int_0^1\big\vert\sqrt Qe^{\alpha tB^T}\xi\big\vert^{2s}\ \mathrm d\alpha\bigg)^{-\frac1{2s}}\le c.
\end{equation}
When the matrix $B$ is nilpotent, the above estimate holds for all $t>0$.
\end{prop}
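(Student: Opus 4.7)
The plan is to adapt the finite-dimensional equivalence-of-norms argument of Proposition~\ref{18072019P3} in parallel for both $L^2$ and $L^{2s}$ norms, with the common yardstick being
$$\tilde N_{t,p}(\xi) := \bigg(\sum_{k=0}^r t^{kp}\,|\sqrt Q (B^T)^k \xi|^p\bigg)^{1/p},\qquad p>0.$$
More precisely, I would show $\int_0^1 |\sqrt Q e^{\alpha t B^T} \xi|^p\, \mathrm d\alpha \asymp \tilde N_{t,p}(\xi)^p$ uniformly in $\xi\in S^\perp$ and $t$ small, for each $p > 0$. The desired estimate then follows from the combination of the upper bound at $p = 2$ and the lower bound at $p = 2s$, together with the elementary inequality $\tilde N_{t,2}(\xi) \leq \tilde N_{t,2s}(\xi)$ (which is the $\ell^p$-monotonicity $\Vert\cdot\Vert_{\ell^2} \leq \Vert\cdot\Vert_{\ell^{2s}}$ on $\mathbb R^{r+1}$ valid for $2s \leq 2$; for $s \geq 1$ the result \eqref{18072019E20} is anyway immediate from H\"older on $(0,1)$).

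For the lower bound, the proof of Proposition~\ref{18072019P3} goes through for arbitrary $p > 0$ essentially unchanged: split $\sqrt Q e^{\alpha tB^T}\xi$ into its Taylor polynomial of degree $r$ in $\alpha$ plus a remainder, use the Minkowski inequality (when $p \geq 1$) or the quasi-norm subadditivity $\Vert f+g\Vert_{L^p}^p\leq \Vert f\Vert_{L^p}^p+\Vert g\Vert_{L^p}^p$ (when $p < 1$), together with the equivalence of all (quasi-)norms on the finite-dimensional space $(\mathbb R_r[\alpha])^n$, in particular with the Hardy norm $\Vert\cdot\Vert_{\mathcal H^1}$ of \eqref{06092021E1}. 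The estimate of the remainder \eqref{18072019E22} adapts without essential difficulty. For the upper bound, writing $e^{uB^T}=\sum_{j=0}^{n-1} P_j(u)(B^T)^j$ by Cayley-Hamilton (with entire functions $P_j(u)=u^j/j!+O(u^{j+1})$), and using that for $\xi\in S^\perp$ the vectors $\sqrt Q(B^T)^j\xi$ with $j \geq r+1$ are linear combinations of the $\sqrt Q(B^T)^i\xi$ for $0\leq i\leq r$ (the kernel of $\xi\mapsto (\sqrt Q(B^T)^i\xi)_{0\leq i\leq r}$ being exactly $S$ by \eqref{22072019E4}), one obtains a representation
$$\sqrt Q e^{u B^T}\xi=\sum_{i=0}^r Q_i(u)\sqrt Q(B^T)^i\xi,\qquad Q_i(u)=\frac{u^i}{i!}+O(u^{r+1}),$$
and hence $|Q_i(\alpha t)|\leq C(\alpha t)^i$ for $0\leq i\leq r$ and $t\leq 1$. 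The pointwise inequality $|\sqrt Q e^{\alpha tB^T}\xi|^p\leq C_p\sum_i |Q_i(\alpha t)|^p|\sqrt Q(B^T)^i\xi|^p$ (by H\"older when $p\geq 1$ and by subadditivity of $x\mapsto x^p$ when $p\leq 1$) then integrates to $\int_0^1|\sqrt Q e^{\alpha tB^T}\xi|^p\, \mathrm d\alpha\leq C_p'\tilde N_{t,p}(\xi)^p$.

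When $B$ is nilpotent of order $K$, the Taylor expansion $\sqrt Q e^{uB^T}\xi=\sum_{k=0}^{K-1}\frac{u^k}{k!}\sqrt Q(B^T)^k\xi$ is a genuine polynomial (no remainder), the constants become independent of $t$, and the estimate extends to every $t>0$. I expect the main obstacle to be the lower bound when $p < 1$ (i.e.\ $s < 1/2$), where the classical Minkowski inequality is unavailable and one must instead work with the quasi-norm subadditivity together with the fact that equivalence of quasi-norms on finite-dimensional spaces nevertheless still holds, so that the remainder-absorption step of Proposition~\ref{18072019P3} carries over.
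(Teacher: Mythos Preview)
Your overall strategy is sound, but the upper-bound step contains a genuine error. The claim that for $\xi\in S^\perp$ the vectors $\sqrt Q(B^T)^j\xi$, $j>r$, are linear combinations of the $\sqrt Q(B^T)^i\xi$, $i\le r$, is false in general, and so the scalar representation $\sqrt Qe^{uB^T}\xi=\sum_{i\le r}Q_i(u)\sqrt Q(B^T)^i\xi$ with $Q_i(u)=u^i/i!+O(u^{r+1})$ does not exist. Take $n=3$, $\sqrt Q$ the orthogonal projection onto $\Span\{e_1,e_2\}$, and $B^Te_1=e_2$, $B^Te_2=0$, $B^Te_3=e_3$: then $r=0$, $S^\perp=\Span\{e_1,e_2\}$, but for $\xi=e_1$ one has $\sqrt Q\xi=e_1$ while $\sqrt QB^T\xi=e_2\notin\Span\{e_1\}$; accordingly $\sqrt Qe^{uB^T}e_1=e_1+ue_2$ is never of the form $Q_0(u)\,e_1$ with $Q_0$ scalar. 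What the injectivity of $\xi\mapsto(\sqrt Q(B^T)^i\xi)_{i\le r}$ on $S^\perp$ \emph{does} give you is only the norm estimate $|\xi|\le C\sum_{i\le r}|\sqrt Q(B^T)^i\xi|$; combined with $|\sqrt Q(B^T)^k\xi|\le C_k|\xi|$ this yields $\sum_{k>r}\frac{(\alpha t)^k}{k!}|\sqrt Q(B^T)^k\xi|\le C't\sum_{i\le r}t^i|\sqrt Q(B^T)^i\xi|$ for $0<t\le1$, which is enough to salvage your upper bound. (Incidentally, the case split $s\le1$ versus $s\ge1$ is unnecessary: all $\ell^p$ quasi-norms on $\mathbb R^{r+1}$ are equivalent, so $\tilde N_{t,2}\asymp\tilde N_{t,2s}$ for every $s>0$.)

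The paper's proof takes a shorter route that sidesteps this upper bound entirely. It compares the $L^2(0,1)$ and $L^{2s}(0,1)$ norms of $\alpha\mapsto\sqrt Qe^{\alpha tB^T}\xi$ directly via the degree-$r$ Taylor polynomial $P_{t,\xi}$: the ratios $\|\sqrt Qe^{\alpha tB^T}\xi\|_{L^p}/\|P_{t,\xi}\|_{L^p}$ are bounded above and below for $p\in\{2,2s\}$ using the remainder bound $\|R_{t,\xi}\|_{L^\infty}\le Mt^{r+1}$ together with the lower bound $\|P_{t,\xi}\|_{L^p}\ge\varepsilon t^r$ on $\mathbb S^{n-1}\cap S^\perp$, and then a single application of the equivalence of the $L^2$ and $L^{2s}$ (quasi-)norms on the finite-dimensional space $(\mathbb R_r[\alpha])^n$ (Lemma~\ref{22062020L1}) finishes the job. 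No intermediate yardstick $\tilde N_{t,p}$ is introduced.
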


\begin{proof} Keeping the notation \eqref{17072019E1} from the proof of Proposition \ref{18072019P2}, we consider the term $M_t$ defined for all $t>0$ by
\begin{equation}
	M_t = \sup_{\xi\in\mathbb S^{n-1}\cap S^{\perp}}\bigg(\int_0^1\big\vert\sqrt Qe^{\alpha tB^T}\xi\big\vert^2\ \mathrm d\alpha\bigg)^{\frac12}\bigg(\int_0^1\big\vert\sqrt Qe^{\alpha tB^T}\xi\big\vert^{2s}\mathrm d\alpha\bigg)^{-\frac1{2s}}.
\end{equation}
Given $t>0$, let us first check that the term $M_t$ is well-defined. If the vector $\xi\in\mathbb S^{n-1}\cap S^{\perp}$ satisfies 
$$\int_0^1\big\vert\sqrt Qe^{\alpha tB^T}\xi\big\vert^p\ \mathrm d\alpha = 0,$$
for some $p\in\{2,2s\}$, we deduce that
$$\forall\alpha\in[0,1],\quad \sqrt Qe^{\alpha tB^T}\xi = 0.$$
By differentiating this identity with respect to $\alpha$ and evaluating in $\alpha = 0$, we deduce that
$$\forall k\in\{0,\ldots,r\},\quad \sqrt Q(B^T)^k\xi = 0.$$
We obtain from \eqref{22072019E4} that $\xi\in S$, which is not possible since $\xi\in\mathbb S^{n-1}\cap S^{\perp}$. We therefore proved that
$$\forall p\in\{2,2s\}, \forall\xi\in\mathbb S^{n-1}\cap S^{\perp},\quad \int_0^1\big\vert\sqrt Qe^{\alpha tB^T}\xi\big\vert^p\ \mathrm d\alpha>0.$$
Moreover, for all $p\in\{2,2s\}$, the functions 
$$\xi\in\mathbb S^{n-1}\cap S^{\perp}\mapsto\int_0^1\big\vert\sqrt Qe^{\alpha tB^T}\xi\big\vert^p\ \mathrm d\alpha,$$
are continuous on the compact set $\mathbb S^{n-1}\cap S^{\perp}$. This implies that the term $M_t$ is well defined and satisfies $0<M_t<+\infty$. 

In the remaining of this proof, we will widely use the following lemma whose proof is straightforward and can be found e.g. in \cite{AB} (Lemma 3.4). 

\begin{lem} \label{22062020L1} Let $E$ be a real finite-dimensional vector space and $L_1,L_2:E\rightarrow\mathbb{R}_+$ be two continuous functions satisfying for all $j\in\{1,2\}$,
$$\forall\lambda\geq 0, \forall P\in E,\quad L_j(\lambda P) = \lambda L_j(P),$$
and
$$\forall P\in E\setminus\{0\},\quad L_j(P) >0.$$
Then, there exists a positive constant $c>0$ such that
$$\forall P\in E,\quad L_1(P) \le cL_2(P).$$
\end{lem}

We now tackle the proof of the estimate \eqref{18072019E20}. For all $t>0$ and $\xi\in\mathbb S^{n-1}\cap S^{\perp}$, we consider 
$$M_t(\xi) = \bigg(\int_0^1\big\vert\sqrt Qe^{\alpha tB^T}\xi\big\vert^2\ \mathrm d\alpha\bigg)^{\frac12}\bigg(\int_0^1\big\vert\sqrt Qe^{\alpha tB^T}\xi\big\vert^{2s}\ \mathrm d\alpha\bigg)^{-\frac1{2s}}.$$
Let us first assume that the matrix $B$ is nilpotent of order $K\geq1$. Since the matrix $B^T$ is also nilpotent with the same index $K$, we have that
$$\forall t>0,\forall\xi\in\mathbb S^{n-1}\cap S^{\perp},\quad \sqrt Qe^{\alpha tB^T}\xi\in(\mathbb R_{K-1}[\alpha])^n.$$
It follows from Lemma \ref{22062020L1} applied with the real finite-dimensional vector space $E = (\mathbb R_{K-1}[\alpha])^n$ and the homogeneous continuous functions
\begin{equation}\label{22062020E4}
	L_1(P) = \bigg(\int_0^1\big\vert P(\alpha)\big\vert^2\ \mathrm d\alpha\bigg)^{\frac12}\quad\text{and}\quad L_2(P) = \bigg(\int_0^1\big\vert P(\alpha)\big\vert^{2s}\ \mathrm d\alpha\bigg)^{\frac1{2s}},
\end{equation}
that there exists a positive constant $c>0$ such that for all $t>0$ and $\xi\in\mathbb S^{n-1}\cap S^{\perp}$,
$$M_t(\xi)\le c.$$
This ends the proof of Proposition \ref{18072019P6} in this particular case.

Back to the case were the matrix $B$ is general, we consider $P_{t,\xi}$ and $R_{t,\xi}$ the functions defined for all $t>0$, $\xi\in\mathbb S^{n-1}\cap S^{\perp}$ and $\alpha\in[0,1]$ by
\begin{equation}\label{30072019E1}
	P_{t,\xi}(\alpha) = \sum_{k=0}^r\alpha^k\frac{t^k}{k!}\sqrt Q(B^T)^k\xi\quad\text{and}\quad R_{t,\xi}(\alpha) = \sqrt Qe^{t\alpha B^T}\xi - P_{t,\xi}(\alpha).
\end{equation}
It is fundamental in the following to notice that for all $t>0$ and $\xi\in\mathbb S^{n-1}\cap S^{\perp}$, the coordinates of the function $P_{t,\xi}$ are polynomials of degree less than or equal to $r$, that is
\begin{equation}\label{19072019E2}
	\forall t>0, \forall\xi\in\mathbb S^{n-1}\cap S^{\perp},\quad P_{t,\xi}\in (\mathbb R_r[\alpha])^n.
\end{equation}
By using \eqref{19072019E2} and Lemma \ref{22062020L1} with the real finite-dimensional vector space $E = (\mathbb R_r[\alpha])^n$ and the homogeneous continuous functions defined in \eqref{22062020E4} anew, we obtain the existence of a constant $c>0$ such that for all $t>0$ and $\xi\in\mathbb S^{n-1}\cap S^{\perp}$,
$$\bigg(\int_0^1\big\vert P_{t,\xi}(\alpha)\big\vert^2\ \mathrm d\alpha\bigg)^{\frac12}\le c\bigg(\int_0^1\big\vert P_{t,\xi}(\alpha)\big\vert^{2s}\ \mathrm d\alpha\bigg)^{\frac1{2s}}.$$
This estimate implies that for all $t>0$ and $\xi\in\mathbb S^{n-1}\cap S^{\perp}$,
\begin{equation}\label{estimationlapluslongdelunivers}
	M_t(\xi)
	\le c\left(\frac{\displaystyle\int_0^1\big\vert\sqrt Qe^{t\alpha B^T}\xi\big\vert^2\ \mathrm d\alpha}{\displaystyle\int_0^1\big\vert P_{t,\xi}(\alpha)\big\vert^2\ \mathrm d\alpha}\right)^{\frac12}\left(\frac{\displaystyle\int_0^1\big\vert P_{t,\xi}(\alpha)\big\vert^{2s}\ \mathrm d\alpha}{\displaystyle\int_0^1\big\vert\sqrt Qe^{\alpha tB^T}\xi\big\vert^{2s}\ \mathrm d\alpha}\right)^{\frac1{2s}}.
\end{equation}
We aim at establishing uniform upper bounds with respect to $t>0$ and $\xi\in\mathbb S^{n-1}\cap S^{\perp}$ for these two factors. To that end, we equip the vector space $(\mathbb R_r[\alpha])^n$ of the Hardy's norm $\Vert\cdot\Vert_{\mathcal H^\infty}$ defined by
$$\forall P\in(\mathbb R_r[\alpha])^n,\quad \Vert P\Vert_{\mathcal H^\infty} = \max_{k\in\{0,\ldots,r\}} \frac{\vert P^{(k)}(0)\vert}{k!}.$$
We deduce anew from \eqref{19072019E2} and Lemma \ref{22062020L1} applied with the real finite-dimensional vector space $E=(\mathbb R_r[\alpha])^n$ and the homogeneous continuous functions $\Vert\cdot\Vert_{\mathcal H^{\infty}}$ and \eqref{22062020E4} that
\begin{equation}\label{ineq_equiv}
	\forall p\in\{2,2s\}, \exists c_p>0, \forall P\in(\mathbb R_ r[\alpha])^n,\quad\Vert P\Vert_{\mathcal H^{\infty}}\le c_p\bigg(\int_0^1\vert P(\alpha)\vert^p\ \mathrm d\alpha\bigg)^{\frac1p}.  
\end{equation}
According to the definition \eqref{22072019E4} of the vector space $S$, we notice that
$$\forall\xi\in\mathbb S^{n-1}\cap S^{\perp},\quad \max_{k\in\{0,\ldots,r\}}\frac{\big\vert\sqrt Q(B^T)^k\xi\big\vert}{k!}>0.$$
Since the function
$$\xi\in\mathbb S^{n-1}\cap S^{\perp}\mapsto\max_{k\in\{0,\ldots,r\}}\frac{\big\vert\sqrt Q(B^T)^k\xi\big\vert}{k!},$$
is continuous on the compact set $\mathbb S^{n-1}\cap S^{\perp}$, we deduce that there exists a positive constant $\varepsilon>0$ such that
$$\forall\xi\in\mathbb S^{n-1}\cap S^{\perp},\quad\Vert P_{1,\xi}\Vert_{\mathcal H^{\infty}} = \max_{k\in\{0,\ldots,r\}}\frac{\big\vert\sqrt Q(B^T)^k\xi\big\vert}{k!}\geq\varepsilon.$$
It follows that
$$\forall t\in(0,1], \forall\xi\in\mathbb S^{n-1}\cap S^{\perp},\quad\Vert P_{t,\xi}\Vert_{\mathcal H^{\infty}} = \max_{k\in\{0,\ldots,r\}}\frac{t^k\big\vert\sqrt Q(B^T)^k\xi\big\vert}{k!}\geq\varepsilon t^r, $$
and we deduce from \eqref{ineq_equiv} that
\begin{equation}\label{ineq_Kalman}
	\forall p\in\{2,2s\},\forall t\in(0,1], \forall\xi\in\mathbb S^{n-1}\cap S^{\perp},\quad\varepsilon t^r\le c_p\bigg(\int_0^1\big\vert P_{t,\xi}(\alpha)\big\vert^p\ \mathrm d\alpha\bigg)^{\frac1p}.
\end{equation}
On the other hand, it follows from Taylor's formula with remainder term that
$$\forall t>0, \forall \xi\in\mathbb S^{n-1}\cap S^{\perp},\forall\alpha\in[0,1],\quad R_{t,\xi}(\alpha) = \frac{(t\alpha)^{r+1}}{r!}\int_0^1(1-\theta)^r\sqrt Q(B^T)^{r+1}e^{t\alpha\theta B^T}\xi \ \mathrm d\theta.$$
Therefore, there exists a positive constant $M>0$ such that
\begin{equation}\label{ineq_remainder}
	\forall t\in(0,1], \forall\xi\in\mathbb S^{n-1}\cap S^{\perp},\quad \Vert R_{t,\xi}\Vert_{L^{\infty}[0,1]} \le M t^{r+1}.
\end{equation}
With these estimates, we can obtain upper bounds on the two factors of the right-hand side of the estimate \eqref{estimationlapluslongdelunivers}. \\[5pt]
\textbf{1.} By using \eqref{30072019E1} and applying the triangle inequality for the $L^2$ norm, we first obtain
$$\forall t>0,\forall\xi\in\mathbb S^{n-1}\cap S^{\perp},\quad\left(\frac{\displaystyle\int_0^1\big\vert\sqrt Qe^{t\alpha B^T}\xi\big\vert^2\ \mathrm d\alpha}{\displaystyle\int_0^1\big\vert P_{t,\xi}(\alpha)\big\vert^2\ \mathrm d\alpha}\right)^{\frac12} 
\le 1+\left(\frac{\displaystyle\int_0^1\big\vert R_{t,\xi}(\alpha)\big\vert^2\ \mathrm d\alpha}{\displaystyle\int_0^1\big\vert P_{t,\xi}(\alpha)\big\vert^2\ \mathrm d\alpha}\right)^{\frac12}.$$
According to \eqref{ineq_Kalman} and \eqref{ineq_remainder}, we therefore get that for all $0<t\le1$ and $\xi\in\mathbb S^{n-1}\cap S^{\perp}$,
\begin{equation}\label{22052018E1}
	\left(\frac{\displaystyle\int_0^1\big\vert\sqrt Qe^{t\alpha B^T}\xi\big\vert^2\ \mathrm d\alpha}{\displaystyle\int_0^1\big\vert P_{t,\xi}(\alpha)\big\vert^2\ \mathrm d\alpha}\right)^{\frac12} 
	\le 1 + \frac{c_2Mt^{r+1}}{\varepsilon t^r} = 1 + \frac{c_2 M}{\varepsilon}t
	\le 1 + \frac{c_2 M}{\varepsilon}.
\end{equation}
\textbf{2.} Notice that the classical estimate
$$\forall\xi,\eta\in\mathbb R^n,\quad \vert\xi+\eta\vert^{2s}\le 2^{(2s-1)_+}(\vert\xi\vert^{2s}+\vert\eta\vert^{2s}),$$
with $(2s-1)_+ = \max(2s-1,0)$, implies the following one
$$\forall\xi,\eta\in\mathbb R^n,\quad 2^{-(2s-1)_+}\vert\xi\vert^{2s}-\vert\eta\vert^{2s}\le\vert\xi-\eta\vert^{2s}.$$
We therefore deduce from \eqref{30072019E1} that for all $0<t\le 1$ and $\xi\in\mathbb S^{n-1}\cap S^{\perp}$,
$$\frac{\displaystyle\int_0^1\big\vert\sqrt Qe^{t\alpha B^T}\xi\big\vert^{2s}\ \mathrm d\alpha}{\displaystyle\int_0^1\big\vert P_{t,\xi}(\alpha)\big\vert^{2s}\ \mathrm d\alpha}
\geq 2^{-(2s-1)_+} - \frac{\displaystyle\int_0^1\big\vert R_{t,\xi}(\alpha)\big\vert^{2s}\ \mathrm d\alpha}{\displaystyle\int_0^1\big\vert P_{t,\xi}(\alpha)\big\vert^{2s}\ \mathrm d\alpha}.$$
Moreover, it follows from \eqref{ineq_Kalman} and \eqref{ineq_remainder} that for all $0<t\le1$ and $\xi\in\mathbb S^{n-1}\cap S^{\perp}$,
$$\frac{\displaystyle\int_0^1\big\vert R_{t,\xi}(\alpha)\big\vert^{2s}\ \mathrm d\alpha}{\displaystyle\int_0^1\big\vert P_{t,\xi}(\alpha)\big\vert^{2s}\ \mathrm d\alpha}
\le\bigg(\frac{c_{2s}Mt^{r+1}}{\varepsilon t^r}\bigg)^{2s} = \bigg(\frac{c_{2s}M}{\varepsilon}t\bigg)^{2s},$$
from which we deduce that
$$\frac{\displaystyle\int_0^1\big\vert\sqrt Qe^{t\alpha B^T}\xi\big\vert^{2s}\ \mathrm d\alpha}{\displaystyle\int_0^1\big\vert P_{t,\xi}(\alpha)\big\vert^{2s}\ \mathrm d\alpha}
\geq 2^{-(2s-1)_+} - \bigg(\frac{c_{2s}M}{\varepsilon}t\bigg)^{2s}.$$
It follows that there exist some positive constants $c_0>0$ and $0<t_0<1$ such that for all $0<t<t_0$ and $\xi\in\mathbb S^{n-1}\cap S^{\perp}$,
\begin{equation}\label{23092018E1}
	\left(\frac{\displaystyle\int_0^1\big\vert\sqrt Qe^{t\alpha B^T}\xi\big\vert^{2s}\ \mathrm d\alpha}{\displaystyle\int_0^1\big\vert P_{t,\xi}(\alpha)\big\vert^{2s}\ \mathrm d\alpha}\right)^{\frac1{2s}}\geq c_0.
\end{equation}
As a consequence of \eqref{estimationlapluslongdelunivers}, \eqref{22052018E1} and \eqref{23092018E1}, there exists a positive constant $c_1>0$ such that
$$\forall t\in(0,t_0), \forall \xi\in\mathbb S^{n-1}\cap S^{\perp},\quad M_t(\xi)\le c_1.$$
This ends the proof of the estimate \eqref{18072019E20} in the general case.
\end{proof}

\end{document}